\renewcommand{\phi}{\varphi}
\renewcommand{\Re}[1]{\operatorname{Re} #1 }
\renewcommand{\Im}[1]{\operatorname{Im} #1}
\newcommand{\norm}[1]{\| #1 \|}
\newcommand{\inner}[1]{\langle #1 \rangle}
\newcommand{\h}{\mathcal{H}}
\newcommand{\C}{\mathbb{C}}
\newcommand{\minimatrix}[4]{\begin{pmatrix} \vspace{5pt}#1 & #2 \\ #3 & #4 \end{pmatrix}  }
\newcommand{\megamatrix}[9]{\begin{pmatrix} #1 & #2 & #3 \\ #4 & #5 & #6 \\ #7 & #8 & #9\end{pmatrix}  }
\newcommand{\UECSMTest}{\texttt{UECSMTest}}
\newcommand{\Test}{\texttt{StrongAngleTest}}
\newcommand{\twovector}[2]{\begin{pmatrix} #1\\#2 \end{pmatrix} }
\newcommand{\threevector}[3]{\begin{pmatrix} #1\\[3pt]#2\\[3pt]#3 \end{pmatrix} }
\newtheorem{Corollary}{Corollary}
\newtheorem{Theorem}{Theorem}
\newtheorem{Lemma}{Lemma}
\theoremstyle{definition}
\newtheorem*{Definition}{Definition}
\newtheorem{Example}{Example}
\begin{document}
    \title[Unitary Equivalence to a Complex Symmetric Matrix]{Unitary Equivalence to 
    a Complex Symmetric Matrix:  Geometric Criteria}

    \author{Levon Balayan}
    \address{   Department of Mathematics\\
		Pomona College\\
    	610 North College Avenue\\
	Claremont, California\\
       91711}
    \email{Levon.Balayan@pomona.edu}
	
    \author{Stephan Ramon Garcia}
    \email{Stephan.Garcia@pomona.edu}
    \urladdr{http://pages.pomona.edu/\textasciitilde sg064747}
	

    \keywords{Complex symmetric matrix, complex symmetric operator, unitary equivalence, unitary orbit, UECSM}
    \subjclass[2000]{15A57, 47A30}
    
    \thanks{This work partially supported by National Science Foundation Grant DMS-0638789.}
    
    \begin{abstract}
        	We develop several methods, based on the geometric relationship between the eigenspaces
	of a matrix and its adjoint, for determining whether a square 
	matrix having distinct eigenvalues is unitarily equivalent to a complex symmetric matrix. 
	Equivalently, we characterize those matrices having distinct eigenvalues which lie in the unitary
	orbit of the complex symmetric matrices. 	
    \end{abstract}

\maketitle

\section{Introduction}

	Our aim in this note is to develop simple geometric criteria for determining whether
	a given square matrix $T \in M_n(\C)$ is unitarily equivalent to a complex symmetric
	matrix (UECSM).  To be more specific, a \emph{complex symmetric matrix}
	is a square matrix $T$ with complex entries such that $T = T^t$ (the 
	superscript $t$ denotes the transpose operation)	
	and two matrices $A,B \in M_n(\C)$
	are \emph{unitarily equivalent} if there exists a unitary $U \in M_n(\C)$ such that $A = U^*BU$.

	Our primary motivation stems from the emerging theory of complex symmetric operators
	on Hilbert space (see \cite{Chevrot, CRW, CCO,CSOA,CSO2, Gilbreath, Sarason}, for instance).
	To be more specific, we say 
	that a bounded operator $T$ on a separable complex Hilbert space $\h$
	is a \emph{complex symmetric operator} if $T = CT^*C$ for some
	conjugation $C$ (a conjugate-linear, isometric involution) on $\h$.  The terminology stems from the fact that
	the preceding condition is equivalent to insisting that the operator have 
	a complex symmetric matrix representation with respect to some
	orthonormal basis \cite[Sect.~2.4-2.5]{CCO}.  
	
	From the preceding remarks, we see that the problem of determining whether a given matrix is UECSM is equivalent to
	determining whether that matrix represents a complex symmetric operator with respect to some
	orthonormal basis.  From another perspective, we may view our main problem as part of a quest to determine
	the structure of the unitary orbit of the set of all complex symmetric matrices.

	Complicating this endeavor, it is well-known that every $n \times n$ complex matrix is \emph{similar} to
	a complex symmetric matrix \cite[Thm.~4.4.9]{HJ} (see also \cite[Ex.~4]{CSOA} and \cite[Thm.~2.3]{CCO}).
	It follows that similarity invariants, such as the Jordan canonical form,
	are useless when attempting to determine whether a given matrix is UECSM.
	This greatly complicates our work.  
	For instance, one can show that among the matrices
	\begin{equation*}\small
		\megamatrix{0}{7}{0}{0}{1}{2}{0}{0}{6} \quad
		\megamatrix{0}{7}{0}{0}{1}{3}{0}{0}{6} \quad
		\megamatrix{0}{7}{0}{0}{1}{4}{0}{0}{6} \quad
		\megamatrix{0}{7}{0}{0}{1}{5}{0}{0}{6} \quad
		\megamatrix{0}{7}{0}{0}{1}{6}{0}{0}{6},
	\end{equation*}	
	all of which belong to the same similarity class, only the fourth is UECSM.
	In fact, prior to the recent advent of Tener's procedure 
	\UECSMTest{} \cite{Tener}, only a handful of matrices were known to be
	\emph{not} UECSM.
	
	In fact, we are partly motivated by Tener's \texttt{UECSMTest}.
	His procedure is based upon the diagonalization of the selfadjoint components $A$ and $B$ in the
	Cartesian decomposition $T = A + iB$.  Although highly effective,
	it is often difficult to understand with this method, in simple geometric terms, \emph{why} a given matrix is UECSM or not.
	In particular, studying the matrices $A$ and $B$ often gives little insight into the eigenstructure of $T$ itself.
	
	In this note, we proceed along a different route.  We develop a number of procedures,
	based upon a direct examination of the eigenstructure of $T$, for testing whether $T$ is UECSM or not.
	To this end, we require that $T$ has distinct eigenvalues -- a condition that is satisfied
	by all matrices outside of a set of Lebesgue measure zero in $M_n(\C)$.  On the other hand, Tener's
	\UECSMTest{} requires that neither $A$ nor $B$ have a repeated eigenvalue.
	In Section \ref{SectionTener}, we consider several numerical examples and establish that
	neither our test nor \texttt{UECSMTest} subsumes the other.  They should therefore be viewed
	as complimentary procedures.

\section{Preliminary Setup}\label{SectionPreliminaries}

	Let $T$ be a $n \times n$ complex matrix having $n$ \emph{distinct} eigenvalues
	$\lambda_1, \lambda_2, \ldots, \lambda_n$ and let $u_1,u_2,\ldots, u_n$ denote 
	\emph{normalized} eigenvectors of $T$ corresponding to the eigenvalues $\lambda_i$.
	Since 
	\begin{equation*}
		\det(T^* - \overline{\lambda_i}I) 
		= \det[(T - \lambda_i I)^*] 
		= \overline{ \det(T - \lambda_i I)} 
		= 0,
	\end{equation*}
	it follows immediately that $T^*$ has the $n$ \emph{distinct} eigenvalues
	$\overline{\lambda_1}, \overline{\lambda_2}, \ldots, \overline{\lambda_n}$.  Let $v_1,v_2,\ldots, v_n$ denote 
	\emph{normalized} eigenvectors of $T^*$ corresponding to the eigenvalues $\overline{\lambda_i}$.
	Since eigenvectors corresponding to distinct eigenvalues are linearly independent, it follows that both
	$\{u_1,u_2,\ldots,u_n\}$ and $\{v_1,v_2,\ldots,v_n\}$ are bases for $\C^n$.

	Based upon the data 
	\begin{equation}\label{eq-Data}
		\boxed{ u_1,u_2,\ldots, u_n; v_1,v_2,\ldots, v_n,}
	\end{equation}
	we wish to determine if $T$ is unitarily equivalent to a complex symmetric matrix (UECSM).
	Before proceeding, we require a few preliminary lemmas.

	\begin{Lemma}\label{LemmaBiorthogonality}
		Under the hypotheses above 
		we have $\inner{u_i,v_j} = 0$ whenever $i \neq j$ and 
		$\inner{u_i,v_i} \neq 0$.
	\end{Lemma}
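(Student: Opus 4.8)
The plan is to exploit the adjoint relation $\inner{Tx,y} = \inner{x,T^*y}$ together with the eigenvalue equations $Tu_i = \lambda_i u_i$ and $T^*v_j = \overline{\lambda_j} v_j$. First I would evaluate $\inner{Tu_i,v_j}$ in two ways. Expanding in the first slot gives $\inner{Tu_i,v_j} = \lambda_i \inner{u_i,v_j}$, while pushing $T$ across the inner product to act as $T^*$ on $v_j$ gives $\inner{u_i,T^*v_j} = \inner{u_i,\overline{\lambda_j}v_j} = \lambda_j \inner{u_i,v_j}$, where the conjugate-linearity of the inner product in its second argument turns the $\overline{\lambda_j}$ back into $\lambda_j$. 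Equating the two expressions yields
\begin{equation*}
    (\lambda_i - \lambda_j)\inner{u_i,v_j} = 0 .
\end{equation*}
When $i \neq j$, the standing hypothesis that $T$ has distinct eigenvalues forces $\lambda_i \neq \lambda_j$, so the scalar factor is nonzero and may be cancelled, giving $\inner{u_i,v_j} = 0$ at once.

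The diagonal nonvanishing $\inner{u_i,v_i} \neq 0$ is the one step carrying any content beyond the routine manipulation above, and it is where I expect the only (mild) obstacle to lie, since it is the place that genuinely uses the basis property recorded in the preamble rather than just the eigenvalue equations. I would argue by contradiction: suppose $\inner{u_i,v_i} = 0$ for some fixed $i$. Combining this with the off-diagonal orthogonality already established shows that $\inner{u_j,v_i} = 0$ for \emph{every} $j$. But $\{u_1,u_2,\ldots,u_n\}$ is a basis for $\C^n$ (being eigenvectors of $T$ for distinct eigenvalues, as noted in the setup), so a vector orthogonal to all of them must vanish, forcing $v_i = 0$. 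This contradicts the fact that $v_i$ is a \emph{normalized}, hence nonzero, eigenvector of $T^*$, and therefore $\inner{u_i,v_i} \neq 0$.

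The entire argument is elementary, and the only real care required is bookkeeping with the inner-product convention (conjugate-linear in the second argument), which is precisely what makes $\overline{\lambda_j}$ reappear as $\lambda_j$ in the key computation; once that sign/conjugation is handled correctly, both halves of the statement follow immediately.
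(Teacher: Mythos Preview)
Your argument is correct and essentially identical to the paper's: both compute $\inner{Tu_i,v_j}$ two ways to obtain $(\lambda_i-\lambda_j)\inner{u_i,v_j}=0$, and both handle the diagonal case by contradiction using that one of the eigenvector sets is a basis. The only cosmetic difference is that the paper fixes $u_i$ and uses the basis $\{v_j\}$ to force $u_i=0$, whereas you fix $v_i$ and use the basis $\{u_j\}$ to force $v_i=0$; these are symmetric versions of the same idea.
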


	\begin{proof}
		If $i \neq j$, then 
		\begin{equation*}
			\lambda_i \inner{u_i,v_j}
			= \inner{ \lambda_i u_i, v_j} \\
			= \inner{ Tu_i, v_j} \\
			= \inner{u_i, T^* v_j} \\
			= \inner{u_i, \overline{\lambda_j} v_j} \\
			= \lambda_j \inner{u_i,v_j}
		\end{equation*}
		whence $\inner{u_i,v_j} = 0$ since $\lambda_i \neq \lambda_j$.  
		On the other hand, if $\inner{u_i,v_i} = 0$ for some $i$, then by the preceding
		$\inner{u_i,v_j} = 0$ for $j = 1,2,\ldots, n$.  Since $\{v_1,v_2,\ldots,v_n\}$ is a basis
		for $\C^n$, it would follow that $\inner{u_i,x} = 0$ for all $x \in \C^n$ whence $u_i = 0$.
		This contradiction shows that we must have $\inner{u_i,v_i} \neq 0$ for $i= 1,2,\ldots, n$.
	\end{proof}

	The following lemma allows us to easily express any $x \in \C^n$ in terms of 
	the bases $\{u_1,u_2,\ldots, u_n\}$ and $\{v_1,v_2,\ldots, v_n\}$:

	\begin{Lemma}
		The following formulas hold for all $x \in \C^n$:
		\begin{align}
			x &= \sum_{j=1}^n \frac{ \inner{x,u_j} }{ \inner{ v_j, u_j} } v_j, \label{eq-ReconstructV}\\
			x &= \sum_{j=1}^n \frac{ \inner{x,v_j} }{ \inner{ u_j, v_j} } u_j. \label{eq-ReconstructU}
		\end{align}
	\end{Lemma}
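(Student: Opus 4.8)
The plan is to exploit the biorthogonality established in Lemma~\ref{LemmaBiorthogonality} to recover the expansion coefficients of $x$ in each basis by pairing against the dual family. Since the two identities are symmetric under interchanging the roles of the $u_j$ and the $v_j$, it suffices to carry out one in detail; I would prove \eqref{eq-ReconstructU} and then obtain \eqref{eq-ReconstructV} by the identical argument with $u$ and $v$ swapped.

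To establish \eqref{eq-ReconstructU}, I would begin from the fact (noted in the setup) that $\{u_1,u_2,\ldots,u_n\}$ is a basis for $\C^n$, so that $x$ admits a unique expansion $x = \sum_{j=1}^n c_j u_j$ with scalars $c_j \in \C$; the entire task is to identify these coefficients. Fixing an index $i$ and taking the inner product of both sides with $v_i$ gives $\inner{x,v_i} = \sum_{j=1}^n c_j \inner{u_j,v_i}$. By Lemma~\ref{LemmaBiorthogonality} every term with $j \neq i$ vanishes, so the sum collapses to $\inner{x,v_i} = c_i \inner{u_i,v_i}$. Because the same lemma guarantees $\inner{u_i,v_i} \neq 0$, the quantity $c_i = \inner{x,v_i}/\inner{u_i,v_i}$ is well defined, and substituting back recovers the asserted formula.

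For \eqref{eq-ReconstructV} I would repeat this verbatim after exchanging the two bases: writing $x = \sum_{j=1}^n d_j v_j$ (legitimate since $\{v_1,\ldots,v_n\}$ is a basis) and pairing with $u_i$ yields $\inner{x,u_i} = d_i \inner{v_i,u_i}$, where the off-diagonal contributions drop because $\inner{v_j,u_i} = \overline{\inner{u_i,v_j}} = 0$ for $j \neq i$, by Lemma~\ref{LemmaBiorthogonality} together with the conjugate-symmetry of the inner product. Solving for $d_i$ then gives the stated expression.

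There is no substantive obstacle; the only points demanding care are bookkeeping. One must pair with $v_i$ rather than $u_i$ so that biorthogonality annihilates the cross terms, and one must invoke the nonvanishing of $\inner{u_i,v_i}$ from Lemma~\ref{LemmaBiorthogonality} to justify dividing by it. The inner product is linear in its first slot and conjugate-linear in its second (as the proof of Lemma~\ref{LemmaBiorthogonality} already exhibits), so in the second identity one should record $\inner{v_j,u_i} = \overline{\inner{u_i,v_j}}$ before appealing to orthogonality.
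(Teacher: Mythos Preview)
Your proof is correct and uses essentially the same idea as the paper: both arguments rest on the biorthogonality from Lemma~\ref{LemmaBiorthogonality} and the fact that $\{u_1,\ldots,u_n\}$ is a basis. The only cosmetic difference is that the paper checks \eqref{eq-ReconstructU} on the basis vectors $x=u_i$ using linearity of the right-hand side, whereas you expand $x=\sum_j c_j u_j$ and solve for the coefficients by pairing with $v_i$; these are two phrasings of the same computation.
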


	\begin{proof}
		By symmetry, it suffices to prove \eqref{eq-ReconstructU}.  Since $\{u_1,u_2,\ldots,u_n\}$ is a basis for $\C^n$ and since
		the expression \eqref{eq-ReconstructU} is linear in $x$, it suffices to verify \eqref{eq-ReconstructU} 
		for $x = u_1,u_2,\ldots,u_n$.
		Since $\inner{u_i,v_j} = 0$ if $i \neq j$ and $\inner{u_i,v_i} \neq 0$, \eqref{eq-ReconstructU} can be verified immediately
		by setting $x = u_i$.
	\end{proof}

	Lastly, we require a few words about a useful and practical way to view the property of being UECSM.

	\begin{Definition}
		A \emph{conjugation} on $\C^n$ is a conjugate-linear operator $C:\C^n \rightarrow \C^n$
		which is both \emph{involutive} (i.e., $C^2 = I$) and \emph{isometric} 
		(i.e., $\inner{Cx,Cy} = \inner{y,x}$ for all $x,y\in \C^n$).\footnote{In light of the polarization identity, this
		is equivalent to $\norm{Cx} =\norm{x}$ for all $x \in \C^n$.}
	\end{Definition}

	In particular, $T$ is a complex symmetric matrix if and only if 
	$T$ is $J$-symmetric (i.e., $T = JT^*J$), where $J$ denotes the \emph{canonical conjugation}
	\begin{equation}\label{eq-Canonical}
		J( z_1,z_2, \ldots, z_n) = ( \overline{z_1} , \overline{z_2}, \ldots, \overline{z_n})
	\end{equation}
	on $\C^n$.  Moreover, the most general conjugation on $\C^n$ is easily seen
	to be of the form $C = SJ$ where $S$ is a complex symmetric unitary matrix.
	Lastly, it is not hard to show that $T$ is UECSM if and only if $T$ is $C$-symmetric
	with respect to some conjugation $C$.

\section{The angle test and its relatives}\label{SectionAngle}

	In this section we briefly outline several convenient geometric conditions which are necessary for
	a given $n \times n$ matrix $T$ to be UECSM (unfortunately, none of these procedures
	is sufficient -- see Example \ref{ExampleCounter}).  Building upon this material,
	we present a condition in Section \ref{SectionStrong} which is both necessary \emph{and} sufficient.  

	Recall that $T$ is UECSM if and only
	if there exists a conjugation $C$ on $\C^n$ such that $T = CT^*C$.  If this holds, 
	then it follows easily that 
	\begin{equation}\label{eq-Symmetry}
		(T - \lambda I)^j x = 0  \quad \Leftrightarrow \quad (T^* - \overline{\lambda} I)^j(Cx) =0.
	\end{equation}
	Maintaining the notation and conventions of Section \ref{SectionPreliminaries}, we see that if $T$
	is $C$-symmetric, then
	the conjugation $C$ maps the one-dimensional eigenspace of $T$ corresponding to $\lambda_i$
	onto the one-dimensional eigenspace of $T^*$ corresponding to $\overline{\lambda_i}$.  This
	is where we invoke the hypothesis that the eigenvalues of $T$ are distinct.
	Since $C$ is isometric and the vectors
	$u_i$ and $v_i$ are normalized, it follows that there are \emph{unimodular} constants $\alpha_i$ such that
	\begin{equation*}
		Cu_i = \alpha_i v_i
	\end{equation*}
	for $i = 1,2,\ldots, n$.  Since $C$ is isometric, this implies that
	\begin{align}
		\inner{u_i,u_j}
		&= \inner{Cu_j,Cu_i} \nonumber \\
		&= \inner{ \alpha_j v_j, \alpha_i v_i} \nonumber \\
		&= \alpha_j \overline{ \alpha_i} \inner{ v_j, v_i}  \label{eq-AlphaDefinition}
	\end{align}
	for $1 \leq i,j, \leq n$.  Taking absolute values in the preceding and utilizing symmetry yields the following
	test which can be implemented easily in \texttt{Mathematica}:

	\begin{Theorem}[Angle Test]\label{TheoremWeak}
		Suppose that
		\begin{enumerate}\addtolength{\itemsep}{0.5\baselineskip}
			\item $T$ is a $n \times n$ matrix with distinct eigenvalues $\lambda_1, \lambda_2, \ldots, \lambda_n$,
			\item $u_1,u_2,\ldots, u_n$ denote normalized eigenvectors of $T$ 
				corresponding to the eigenvalues $\lambda_i$,
			\item $v_1,v_2,\ldots, v_n$ denote normalized eigenvectors of $T^*$ 
				corresponding to the eigenvalues $\overline{\lambda_i}$.	
		\end{enumerate}
		Under these hypotheses, the condition $|\inner{u_i,u_j}| = |\inner{v_i,v_j}|$ for all $1 \leq i < j \leq n$ is necessary for
		$T$ to be UECSM.
	\end{Theorem}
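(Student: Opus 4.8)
The plan is to assume that $T$ is UECSM and then extract the claimed modulus equalities directly from the action of the implementing conjugation on the eigenvectors. So suppose $T = CT^*C$ for some conjugation $C$ on $\C^n$. The first step is to record the intertwining relation \eqref{eq-Symmetry}: applying $C$ carries the eigenspace of $T$ for $\lambda_i$ onto the eigenspace of $T^*$ for $\overline{\lambda_i}$. This is exactly where the standing hypothesis of distinct eigenvalues earns its keep, since each such eigenspace is then one-dimensional, so that $C$ maps the line $\C u_i$ bijectively onto the line $\C v_i$.

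Next I would pin down the scalars. Because $Cu_i$ lies on the line spanned by $v_i$, we may write $Cu_i = \alpha_i v_i$ for some $\alpha_i \in \C$; since $C$ is isometric and both $u_i$ and $v_i$ are unit vectors, the chain $\norm{Cu_i} = \norm{u_i} = 1 = \norm{v_i}$ forces $|\alpha_i| = 1$. With the constants in hand, the third step is the short computation already indicated in \eqref{eq-AlphaDefinition}: using the conjugate-linearity of $C$ together with its defining isometry property $\inner{Cx,Cy} = \inner{y,x}$, we obtain
\begin{equation*}
    \inner{u_i,u_j} = \inner{Cu_j,Cu_i} = \alpha_j \overline{\alpha_i}\inner{v_j,v_i}
\end{equation*}
for all $1 \leq i,j \leq n$.

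Finally, taking absolute values annihilates the unimodular factor $\alpha_j\overline{\alpha_i}$, leaving $|\inner{u_i,u_j}| = |\inner{v_j,v_i}|$; and since $|\inner{v_j,v_i}| = |\inner{v_i,v_j}|$ by the conjugate symmetry of the inner product, this is precisely the asserted identity. Restricting to the indices $i < j$ costs nothing, again by symmetry. I do not anticipate a genuine obstacle in this argument: the only delicate point is the appeal to distinct eigenvalues to guarantee one-dimensional eigenspaces, without which $C$ need not send $u_i$ to a scalar multiple of $v_i$ and the derivation of the unimodular constants $\alpha_i$ would collapse. The deeper issue, namely that this necessary condition is not sufficient, is reserved for the promised counterexample and lies outside the present statement.
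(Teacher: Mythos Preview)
Your argument is correct and mirrors the paper's own derivation essentially step for step: the paper likewise invokes \eqref{eq-Symmetry} and the distinct-eigenvalue hypothesis to obtain $Cu_i = \alpha_i v_i$ with $|\alpha_i|=1$, then computes \eqref{eq-AlphaDefinition} via $\inner{u_i,u_j} = \inner{Cu_j,Cu_i}$ and takes absolute values. There is nothing to add.
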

	
	In light of the fact that Theorem \ref{TheoremWeak} takes into consideration the
	(complex) angles between the eigenspaces of $T$ and compares them to the (complex)
	angles between the eigenspaces of $T^*$, we refer to the procedure introduced in Theorem \ref{TheoremWeak}
	as the \emph{Angle Test}.
	One can interpret the Angle Test as asserting that the geometric relationship between the eigenspaces
	of $T$ must precisely mirror the geometric relationship between the 
	eigenspaces of $T^*$.  In some sense, $T$ and $T^*$ must be perfect mirror images of each other.
	In Section \ref{SectionStrong}, we present a refined version of 
	Theorem \ref{TheoremWeak} which yields a necessary and sufficient condition for $T$ to be UECSM.
	
	It turns out that the same principles can also be used in certain cases where the eigenvalues
	of $T$ are not distinct.   For instance in \cite[Ex.~7]{CSOA}, a similar argument is used to show that the matrix
	\begin{equation*}
		\megamatrix{1}{a}{0}{0}{0}{b}{0}{0}{1}
	\end{equation*}
	is not UECSM whenever $|a| \neq |b|$.

	The condition \eqref{eq-AlphaDefinition} can also be interpreted in terms of Gram matrices.
	Let $U = (u_1|u_2|\cdots|u_n)$ and $V= (v_1|v_2|\cdots|v_n)$ and
	observe that \eqref{eq-AlphaDefinition} is equivalent to asserting that
	\begin{equation}\label{eq-Grammian}
		(U^*U)^t = A^*(V^*V)A
	\end{equation}
	holds where $A = \operatorname{diag}(\alpha_1,\alpha_2,\ldots,\alpha_n)$ denotes 
	the diagonal unitary matrix having the unimodular constants $\alpha_1,\alpha_2,\ldots,\alpha_n$
	along the main diagonal.  This leads us to the following test:
	
	\begin{Corollary}[Grammian Test]\label{CorollaryGrammian}
		A necessary condition for $T$ to be UECSM is that
		$U^*U$ and $V^*V$ have the same eigenvalues, repeated according to multiplicity.
	\end{Corollary}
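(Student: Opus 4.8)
The plan is to read the conclusion directly off the Gram-matrix identity \eqref{eq-Grammian}, which the discussion preceding the corollary already derives as a consequence of $T$ being UECSM. Thus if $T$ is UECSM I may assume that there exist unimodular constants $\alpha_1,\ldots,\alpha_n$ such that $(U^*U)^t = A^*(V^*V)A$, where $A = \operatorname{diag}(\alpha_1,\ldots,\alpha_n)$. The whole of the corollary then reduces to observing that each side of this identity is spectrally indistinguishable from the corresponding bare Gram matrix.

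First I would dispatch the right-hand side. Each $\alpha_i$ is unimodular, so $A$ is a diagonal unitary matrix and $A^* = A^{-1}$; hence $A^*(V^*V)A$ is unitarily equivalent --- in particular similar --- to $V^*V$. Similar matrices share a characteristic polynomial, so the right-hand side carries exactly the eigenvalues of $V^*V$, counted with multiplicity.

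Next I would treat the left-hand side. For any square matrix $M$ one has $\det(M^t - \lambda I) = \det\big((M-\lambda I)^t\big) = \det(M - \lambda I)$, so $M$ and $M^t$ share a characteristic polynomial; taking $M = U^*U$ shows that $(U^*U)^t$ and $U^*U$ have identical eigenvalues with multiplicity. (Equivalently, since $U^*U$ is Hermitian one has $(U^*U)^t = \overline{U^*U}$, and conjugating a Hermitian matrix entrywise leaves its real spectrum untouched.)

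Combining these two remarks with \eqref{eq-Grammian} yields that $U^*U$ and $V^*V$ have the same eigenvalues, repeated according to multiplicity, as claimed. I do not anticipate any genuine obstacle: the substance is entirely contained in \eqref{eq-Grammian}, and the corollary is merely its spectral shadow once one notes that transposition and conjugation by a unitary both preserve the characteristic polynomial. The only point deserving a moment's attention is to track multiplicities rather than bare sets of eigenvalues, and this is automatic because all three operations involved (transpose, entrywise conjugation, and unitary similarity) preserve the full characteristic polynomial.
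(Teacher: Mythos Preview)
Your proposal is correct and follows essentially the same route as the paper: both arguments read the conclusion directly off \eqref{eq-Grammian} by noting that unitary conjugation and transposition each preserve the spectrum (with multiplicity). The only cosmetic difference is that the paper invokes positivity of $U^*U$ and the spectral theorem to handle the transpose, whereas you use the characteristic-polynomial identity $\det(M^t-\lambda I)=\det(M-\lambda I)$; either justification is adequate here.
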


	\begin{proof}
		If $T$ is UECSM, then \eqref{eq-Grammian} holds.
		Since $U^*U$ is a positive matrix, it follows that $U^*U$ and $(U^*U)^t$ are both unitarily
		equivalent to the same diagonal matrix, whence \eqref{eq-Grammian} implies that $U^*U$ and $V^*V$
		are unitarily equivalent.
	\end{proof}

	We should remark that Example \ref{ExampleCounter} in Section \ref{SectionExamples}
	reveals that passing the Grammian Test is insufficient for a matrix to be UECSM.
	On the other hand, we show in Section \ref{SectionConstructing} that
	\eqref{eq-Grammian} is both necessary and \emph{sufficient} for $T$ to be UECSM.  
	
	Taking the determinant of both sides of 
	\eqref{eq-Grammian} immediately yields the following:

	\begin{Corollary}[Parallelepiped Test]\label{CorollaryPT}
		Maintaining the notation above, if $|\det U| \neq |\det V|$,
		then $T$ is not UECSM.
	\end{Corollary}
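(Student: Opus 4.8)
The plan is to derive the Parallelepiped Test as an immediate consequence of the Grammian relation \eqref{eq-Grammian}, which we are entitled to assume holds whenever $T$ is UECSM. First I would take determinants of both sides of the equation $(U^*U)^t = A^*(V^*V)A$. The left-hand side is straightforward: since the determinant of a matrix equals the determinant of its transpose, $\det\bigl((U^*U)^t\bigr) = \det(U^*U) = \det(U^*)\det(U) = \overline{\det U}\,\det U = |\det U|^2$. The right-hand side factors multiplicatively as $\det(A^*)\det(V^*V)\det(A)$, and since $A$ is unitary we have $|\det A| = 1$, so $\det(A^*)\det(A) = \overline{\det A}\,\det A = |\det A|^2 = 1$, leaving $\det(V^*V) = |\det V|^2$.

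Putting these two computations together, \eqref{eq-Grammian} forces $|\det U|^2 = |\det V|^2$, and hence $|\det U| = |\det V|$. This is the entire content of the argument: the Parallelepiped Test is simply the contrapositive of this deduction. That is, if $|\det U| \neq |\det V|$, then \eqref{eq-Grammian} cannot hold, and so $T$ cannot be UECSM.

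There is essentially no obstacle here; the work is a routine application of the multiplicativity of the determinant together with the elementary facts that $\det(M^t) = \det M$ and that a unitary matrix has unimodular determinant. The only point requiring a moment's care is to track the absolute values correctly and to note that passing from $|\det U|^2 = |\det V|^2$ to $|\det U| = |\det V|$ is legitimate because both quantities are nonnegative reals. Geometrically, $|\det U|$ and $|\det V|$ measure the volumes of the parallelepipeds spanned by the normalized eigenvectors of $T$ and $T^*$ respectively, which motivates the name of the test and reinforces the paper's recurring theme that the eigenstructures of $T$ and $T^*$ must be mirror images.
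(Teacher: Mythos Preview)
Your proof is correct and follows exactly the approach the paper takes: the paper's entire argument is the single sentence ``Taking the determinant of both sides of \eqref{eq-Grammian} immediately yields the following,'' and you have simply spelled out that computation in detail.
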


	The name \emph{Parallelepiped Test} stems from the fact that
	$|\det U|^{\frac{1}{2}}$ and $|\det V|^{\frac{1}{2}}$ can be interpreted as the volumes of the generalized
	parallelepipeds in $\C^n$ spanned by the vectors $u_1,u_2,\ldots,u_n$
	and $v_1,v_2,\ldots,v_n$, respectively.
	
	The following example illustrates the preceding ideas:	
	
	\begin{Example}
		We claim that the matrix
		\begin{equation*}
			T = \megamatrix{0}{1}{1}{0}{1}{0}{0}{0}{2}
		\end{equation*}
		is not UECSM.  Letting $\lambda_1 = 0$, $\lambda_2 = 1$, and $\lambda_3 =2$ we obtain the corresponding
		normalized eigenvectors
		\begin{equation}\label{eq-PTu}
			u_1 = \threevector{1}{0}{0}, \qquad
			u_2 = \threevector{ \frac{1}{\sqrt{2}} }{ \frac{1}{\sqrt{2}}}{0},\qquad
			u_3 = \threevector{ \frac{1}{\sqrt{5}} }{0}{ \frac{2}{\sqrt{5}}},
		\end{equation}
		and
		\begin{equation}\label{eq-PTv}
			v_1 = \threevector{-\frac{2}{3}}{ \frac{2}{3} }{\frac{1}{3}}, \qquad
			v_2 = \threevector{ 0}{1}{0}, \qquad
			v_3 = \threevector{0}{0}{1},
		\end{equation}
		of $T$ and $T^*$, respectively.  Setting
		\begin{equation*}
			U = 
			\left(
			\begin{array}{c|c|c}
				 1 & \frac{1}{\sqrt{2}} & \frac{1}{\sqrt{5}} \\
				 0 & \frac{1}{\sqrt{2}} & 0 \\
				 0 & 0 & \frac{2}{\sqrt{5}}
			\end{array}
			\right),
			\qquad
			V=
			\left(
			\begin{array}{c|c|c}
				 -\frac{2}{3} & 0 & 0 \\
				 \frac{2}{3} & 1 & 0 \\
				 \frac{1}{3} & 0 & 1
			\end{array}
			\right),
		\end{equation*}
		we immediately find that
		\begin{equation*}
		|\det U| = \sqrt{ \frac{2}{5}} \neq \frac{2}{3} = | \det V|
		\end{equation*}
		whence it follows from the Parallelepiped Test that $T$ is not UECSM.

		Moreover, we also have
		\begin{equation}\label{eq-UUVV}
		(U^*U)^t=
		\begin{pmatrix}
		 1 & \frac{1}{\sqrt{2}} & \frac{1}{\sqrt{5}} \\
		 \frac{1}{\sqrt{2}} & 1 & \frac{1}{\sqrt{10}} \\
		 \frac{1}{\sqrt{5}} & \frac{1}{\sqrt{10}} & 1
		\end{pmatrix},
		\qquad
		V^*V=
		\begin{pmatrix}
		 1 & \frac{2}{3} & \frac{1}{3} \\
		 \frac{2}{3} & 1 & 0 \\
		 \frac{1}{3} & 0 & 1
		\end{pmatrix}
		\end{equation}
		whence, by considering the moduli of the off-diagonal entries in \eqref{eq-UUVV},
		it is clear that no diagonal unitary matrix $A$ exists which satisfies \eqref{eq-Grammian}.
		Thus the Grammian Test once again establishes that $T$ is not UECSM.

		Finally, let us take this opportunity to illustrate the Angle Test, which is less computationally intensive
		than either the Parallelepiped Test or the Grammian Test.
		A short calculation based upon the data
		\eqref{eq-PTu} and \eqref{eq-PTv} reveals that
		\begin{equation*}
		|\inner{ u_1,u_2}| = \frac{1}{\sqrt{2}} \neq \frac{2}{3} = | \inner{ v_1,v_2}|
		\end{equation*}
		whence $T$ is not UECSM.
		This can be also seen directly by examining the $(1,2)$ entry of the matrices in \eqref{eq-UUVV}.
	\end{Example}	
	
	It is important to remark that none of the conditions described in Theorem \ref{TheoremWeak},
	Corollary \ref{CorollaryGrammian}, or Corollary \ref{CorollaryPT}, are sufficient for $T$
	to be UECSM.  This is illustrated in a series of rather involved computations
	(see Example \ref{ExampleCounter}) that we postpone until later.  In Section \ref{SectionStrong}
	we remedy this situation and provide a test which is both necessary and sufficient.

\section{Constructing a Conjugation}\label{SectionConstructing}

	Under our running hypotheses, it turns out that the condition \eqref{eq-AlphaDefinition} is sufficient for 
	$T$ to be UECSM (in particular, so is the Gram matrix condition \eqref{eq-Grammian}).
	The following lemma is the main workhorse upon which the rest of this note is based:
	
	\begin{Lemma}\label{LemmaConstruct}
		Let
		\smallskip
		\begin{enumerate}\addtolength{\itemsep}{0.5\baselineskip}
			\item $T$ be a $n \times n$ matrix with distinct eigenvalues $\lambda_1, \lambda_2, \ldots, \lambda_n$,
			\item $u_1,u_2,\ldots, u_n$ denote normalized eigenvectors of $T$ 
				corresponding to the eigenvalues $\lambda_i$,
			\item $v_1,v_2,\ldots, v_n$ denote normalized eigenvectors of $T^*$ 
				corresponding to the eigenvalues $\overline{\lambda_i}$.		
			\end{enumerate}
		\smallskip
		If unimodular constants $\alpha_1,\alpha_2,\ldots, \alpha_n$ exist such that 
		\begin{equation}\label{eq-AlphaCondition}
			\inner{u_i,u_j} = \overline{\alpha_i} \alpha_j \inner{v_j,v_i}
		\end{equation}
		holds for $1 \leq i < j \leq n$, then $T$ is UECSM.
	\end{Lemma}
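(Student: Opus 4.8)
The plan is to construct an explicit conjugation $C$ on $\C^n$ witnessing the $C$-symmetry of $T$, reversing the heuristic computation \eqref{eq-AlphaDefinition} that produced \eqref{eq-AlphaCondition} in the first place. First I would let $C$ be the unique conjugate-linear map determined by
\begin{equation*}
	C u_i = \alpha_i v_i, \qquad i = 1,2,\ldots,n,
\end{equation*}
and extended conjugate-linearly off the basis $\{u_1,\ldots,u_n\}$; concretely, in terms of the reconstruction formula \eqref{eq-ReconstructU}, this reads
\begin{equation*}
	Cx = \sum_{j=1}^n \overline{\left( \frac{\inner{x,v_j}}{\inner{u_j,v_j}}\right)} \alpha_j v_j .
\end{equation*}
This $C$ is well defined; the task is then to verify that it is an isometric involution satisfying $T = CT^*C$.

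Second, I would show that $C$ is isometric, i.e. $\inner{Cx,Cy} = \inner{y,x}$. By sesquilinearity it suffices to test this on the basis $\{u_i\}$, where $\inner{Cu_i,Cu_j} = \alpha_i\overline{\alpha_j}\inner{v_i,v_j}$; conjugating the hypothesis \eqref{eq-AlphaCondition} rewrites this as $\inner{u_j,u_i}$ for $i<j$, the case $i>j$ follows by relabeling indices, and the case $i=j$ is immediate from $|\alpha_i| = \norm{v_i} = 1$.

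Third — and this is where I expect the actual work to lie — I would prove $C^2 = I$. The difficulty is that $C$ has only been prescribed on the $u_i$, so involutivity amounts to recovering its action on the $v_i$, namely $C v_i = \alpha_i u_i$. Here the preliminary lemmas carry the load: applying the isometry just established to the pair $v_i,u_j$ gives $\overline{\alpha_j}\inner{Cv_i,v_j} = \inner{u_j,v_i}$, and Lemma \ref{LemmaBiorthogonality} then forces $\inner{Cv_i,v_j} = 0$ for $j \neq i$ and $\inner{Cv_i,v_i} = \alpha_i\inner{u_i,v_i}$. Substituting these values into the reconstruction formula \eqref{eq-ReconstructU} collapses the expansion of $Cv_i$ to its single surviving term $\alpha_i u_i$, whence $C^2 u_i = \overline{\alpha_i}\,Cv_i = |\alpha_i|^2 u_i = u_i$ and therefore $C^2 = I$. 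At this point $C$ is a bona fide conjugation.

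Finally, I would verify the intertwining relation in the equivalent form $CT^* = TC$. Testing on the basis $\{v_i\}$ and using $T^*v_i = \overline{\lambda_i}v_i$ together with $Cv_i = \alpha_i u_i$ and $Tu_i = \lambda_i u_i$ yields $CT^*v_i = \lambda_i\alpha_i u_i = TCv_i$; since $\{v_i\}$ spans $\C^n$, this gives $T = CT^*C$, so $T$ is UECSM. The crux of the whole argument is the involutivity step, where one must deduce $Cv_i = \alpha_i u_i$ from the definition of $C$ on the $u_i$ alone; the isometry and intertwining steps are then routine verifications on a basis.
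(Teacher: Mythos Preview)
Your proof is correct and follows the same architecture as the paper's: define $C$ by $Cu_i=\alpha_iv_i$, verify that $C$ is a conjugation, and check $T=CT^*C$. The difference is in the internal ordering and tactics. The paper proves involutivity \emph{first}, by expanding $\alpha_i v_i$ in the $u$-basis via \eqref{eq-ReconstructU} and applying $C$ a second time, then establishes isometry by a direct norm computation, and finally tests $CT^*C=T$ on the $u_i$. You instead prove isometry first on the basis, then leverage it to compute $\langle Cv_i,v_j\rangle$ and read off $Cv_i=\alpha_i u_i$ from \eqref{eq-ReconstructU}; involutivity and the intertwining relation (tested on the $v_i$) then drop out in one line each. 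Your route is arguably a bit slicker---proving isometry first lets biorthogonality do the work in the involutivity step, avoiding the paper's longer chain of substitutions---while the paper's order has the minor advantage that each step is self-contained and does not rely on a previously established property of $C$.
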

	
	\begin{proof}
		First observe that if \eqref{eq-AlphaCondition} holds for $1 \leq i < j \leq n$, then \eqref{eq-AlphaCondition}
		holds whenever $1 \leq i, j \leq n$ by symmetry and the fact that we are considering normalized eigenvectors.	
		Let $Cu_i = \alpha_i v_i$ for $i = 1,2,\ldots, n$ and extend this by conjugate-linearity to all of $\C^n$.
		We intend to show that $C$ is a conjugation with respect to which $T$ is $C$-symmetric.
		Since $C$ is conjugate-linear by definition, 
		it suffices to show that $C$ is involutive and isometric.
		\medskip

		\noindent\textbf{Step 1}:  Show that $C$ is involutive (i.e., $C^2 = I$).
		\medskip

		Since $C^2$ is linear, it suffices to verify that $C^2 u_i = u_i$ for $i = 1,2,\ldots,n$.
		By \eqref{eq-ReconstructU} it follows that
		\begin{equation}\label{eq-FirstStep}
			Cu_i
			= \alpha_i v_i 
			= \alpha_i \sum_{j=1}^n \frac{ \inner{v_i,v_j} }{ \inner{u_j, v_j} } u_j
		\end{equation} 
		whence 
		\begin{align*}
			C^2 u_i
			&= C \left(\alpha_i \sum_{j=1}^n \frac{ \inner{v_i,v_j} }{ \inner{u_j, v_j} } u_j \right) 
				&&\text{\small by \eqref{eq-FirstStep}}\\ 
			&= \overline{\alpha_i} \sum_{j=1}^n \frac{ \inner{v_j,v_i} }{ \inner{v_j, u_j} } Cu_j 
				&&\text{\small conjugate-linearity}\\
			&= \overline{\alpha_i} \sum_{j=1}^n \frac{ \inner{v_j,v_i} }{ \inner{v_j, u_j} } \alpha_j v_j 
				&&\text{\small definition of $C$}\\
			&= \overline{\alpha_i} \sum_{j=1}^n \frac{ \alpha_i \overline{ \alpha_j} \inner{u_i,u_j} }{ \inner{v_j, u_j} } \alpha_j v_j 
				&&\text{\small by \eqref{eq-AlphaCondition}}\\
			&= \sum_{j=1}^n \frac{ \inner{u_i,u_j} }{ \inner{v_j, u_j} } v_j   \\
			&= u_i.  
				&& \text{\small by \eqref{eq-ReconstructV}}
		\end{align*}
		Thus $C$ is involutive.
		\medskip

		\noindent\textbf{Step 2}:  Show that $C$ is isometric (i.e., $\norm{Cx} = \norm{x}$ for all $x \in \C^n$).
		\medskip

			If $x = \sum_{i=1}^n c_i u_i$, then observe that 
		\begin{align*}
			\norm{x}^2
			&= \inner{x,x} \\
			&= \sum_{i,j=1}^n c_i \overline{c_j} \inner{u_i,  u_j } \\
			&= \sum_{i,j=1}^n c_i \overline{c_j} \overline{ \alpha_i} \alpha_j \inner{v_j,  v_i } 
				&&\text{by \eqref{eq-AlphaCondition}}\\
			&= \sum_{i,j=1}^n c_i \overline{c_j}  \inner{\alpha_j v_j,  \alpha_i v_i } \\
			&= \sum_{i,j=1}^n c_i \overline{c_j}  \inner{Cu_j,  Cu_i} 
				&& \text{definition of $C$}\\
			&= \sum_{i,j=1}^n    \inner{\overline{c_j} Cu_j,  \overline{c_i} Cu_i} \\
			&=  \inner{\sum_{j=1}^n   \overline{c_j} Cu_j,  \sum_{i=1}^n   \overline{c_i} Cu_i} \\
			&= \inner{Cx,Cx} \\
			&= \norm{Cx}^2.
		\end{align*}
		Thus $C$ is isometric whence $C$ is a conjugation on $\C^n$.
		\medskip

		\noindent\textbf{Step 3}:  Show that $T$ is $C$-symmetric (i.e., $T = CT^*C$).
		\medskip

		Since both $T$ and $CT^*C$ are linear, it suffices to prove that they agree on the basis
		$u_1,u_2,\ldots,u_n$.  Having shown that $C^2 = I$, it now follows from the 
		equation $Cu_i = \alpha_i v_i$ and the conjugate-linearity of $C$ that $Cv_i = \alpha_i u_i$.
		Thus
		\begin{align*}
			CT^*C u_i
			&= CT^*(\alpha_i v_i) 
			= \overline{\alpha_i} CT^*v_i 
			= \overline{\alpha_i} C \overline{\lambda_i} v_i \\ 
			&= \overline{\alpha_i} \lambda_i C v_i 
			= \overline{\alpha_i} \lambda_i \alpha_i u_i 
			= \lambda_i u_i 
			= Tu_i
		\end{align*}
		whence $T$ is $C$-symmetric and hence UECSM.
	\end{proof}

	The conjugation $C$ constructed by Lemma \ref{LemmaConstruct} can be
	concretely realized as $C = SJ$ where $S$ is a complex symmetric unitary matrix
	and $J$ denotes the canonical conjugation \eqref{eq-Canonical}
	on $\C^n$.  Let us briefly describe the construction of the matrix $S$.

	First observe that $C$ satisfies $Cu_i = \alpha_i v_i$, which is 
	easily seen to be equivalent to $Cv_i = \alpha_i u_i$ for $i=1,2,\ldots,n$.
	As before, let $U = (u_1 | u_2 | \cdots | u_n)$ and $V = (v_1 | v_2 | \cdots | v_n)$ denote the
	matrices having the vectors $u_1,u_2,\ldots, u_n$ and $v_1,v_2, \ldots, v_n$, respectively, as columns.  Since the
	columns of $U$ and $V$ form bases of $\C^n$, it follows that both of these matrices are invertible.  Next we note that
	\begin{equation*}
		V^* U 
		= 
		\begin{pmatrix}
			\overline{v_1} \\ \hline \overline{v_2} \\ \hline \vdots \\ \hline \overline{v_n} 
		\end{pmatrix}
		(u_1 | u_2 | \cdots | u_n) = 
		\begin{pmatrix}
			\inner{u_1,v_1} & & &\\
			& \inner{u_2,v_2} & & \\
			& & \ddots &\\
			&&&\inner{u_n,v_n}
		\end{pmatrix}
		= E
	\end{equation*}
	by Lemma \ref{LemmaBiorthogonality}.  Let
	\begin{equation}\label{eq-DefinitionAMatrix}
		A = 
		\begin{pmatrix}
		\alpha_1 & & &\\
		& \alpha_2 & & \\
		& & \ddots &\\
		&&&\alpha_n
		\end{pmatrix}
	\end{equation}
	and
	\begin{equation}\label{eq-EDefinition}
		D = AE^{-1} = 
		\begin{pmatrix}
		\frac{\alpha_1}{ \inner{u_1,v_1} } &&&\\
		&\frac{\alpha_2}{ \inner{u_2,v_2} } &&\\
		& & \ddots &\\
		&&&\frac{\alpha_n}{ \inner{u_n,v_n} }
		\end{pmatrix}.
	\end{equation}
	
	We claim that $C = SJ$ where
	\begin{equation}\label{eq-SClaim}
		S = UDU^t.
	\end{equation}
	To prove \eqref{eq-SClaim}, it suffices to show that the conjugate-linear operators
	 $C$ and $SJ$ agree on each of the vectors $v_i$.  
	In other words, we must show that $SJv_i = \alpha_i u_i$ for $i = 1,2,\ldots,n$.
	Letting $s_1,s_2,\ldots,s_n$ denote the standard basis for $\C^n$ we have
	\begin{align*}\qquad
		SJv_i 
		&= UDU^tJ v_i &&\text{by \eqref{eq-SClaim}}\\
		&= UAE^{-1}U^t J v_i  &&\text{by \eqref{eq-EDefinition}}\\
		&= UAE^{-1}J U^* v_i && \text{since $JU^* = U^t J$}\\
		&= UAJ \overline{E}^{-1} U^* v_i &&  \text{since $J\overline{E}^{-1} = E^{-1}J$}\\
		&= UAJ V^{-1} v_i && \text{since $V^{-1} =  \overline{E}^{-1} U^*$}\\
		&= UAJ s_i  && \text{def.~of $V$}\\
		&= UA s_i  && \text{since $Js_i = s_i$}\\
		&= U\alpha_i s_i  && \text{by \eqref{eq-DefinitionAMatrix}} \\
		&= \alpha_i u_i.  && \text{def.~of $U$}
	\end{align*}
	Thus $C = SJ$.  Since the matrix $D = AE^{-1}$ is diagonal, it is clear from \eqref{eq-SClaim} that $S$
	is symmetric.  Since $S = CJ$ is the product of two conjugations,
	it is an invertible isometry and hence unitary (see also \cite[Lem.~1]{CSO2}).

	It is worth remarking that the condition $T = CT^*C$ implies that
	$T = SJT^*SJ = ST^tJSJ = ST^tS^*$ since $S$ is symmetric (i.e., $S$ is $J$-symmetric).
	Therefore the matrix $S$ yields a unitary equivalence between $T$ and its
	transpose $T^t$.

\section{The Strong Angle Test}\label{SectionStrong}
	
	The main theorem of this article is the following necessary and sufficient condition
	for a matrix with distinct eigenvalues to be UECSM.  The procedure introduced in the following theorem
	was implemented in \texttt{Mathematica} by the first author.  We refer to this procedure as \Test. 

	\begin{Theorem}[Strong Angle Test]\label{TheoremStrong}
		If
		\smallskip
		\begin{enumerate}\addtolength{\itemsep}{0.5\baselineskip}
			\item $T$ is a $n \times n$ matrix with distinct eigenvalues $\lambda_1, \lambda_2, \ldots, \lambda_n$,
			\item $u_1,u_2,\ldots, u_n$ denote normalized eigenvectors of $T$ 
				corresponding to the eigenvalues $\lambda_1, \lambda_2, \ldots, \lambda_n$,
			\item $v_1,v_2,\ldots, v_n$ denote normalized eigenvectors of $T^*$ 
				corresponding to the eigenvalues $\overline{\lambda_1}, \overline{\lambda_2}, \ldots, \overline{\lambda_n}$,
		\end{enumerate}
		\smallskip
		then $T$ is UECSM if and only if the condition
		\begin{equation}\label{eq-Cocycle}
			\inner{u_i,u_j} \inner{u_j,u_k} \inner{u_k,u_i} 
			= \overline{ \inner{v_i,v_j}  \inner{v_j,v_k} \inner{ v_k,v_i} }
		\end{equation}
		holds whenever $1 \leq i \leq j \leq k \leq n$ and not all of $i,j,k$ are equal.\footnote{Observe that setting $k=j$ in
		condition \eqref{eq-Cocycle} leads to $|\inner{u_i,u_j}| = |\inner{v_i,v_j}|$ for $1 \leq i \leq j \leq n$.
		Thus Theorem \ref{TheoremStrong} can be viewed as an extension of the original
		Angle Test (Theorem \ref{TheoremWeak}).  Also note that if $i = j = k$, then
		\eqref{eq-Cocycle} merely asserts that $\norm{u_i} = \norm{v_i}$ which is already known from conditions (ii) and (iii).}
	\end{Theorem}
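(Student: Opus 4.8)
The plan is to collapse the whole statement onto the single Gram-type relation \eqref{eq-AlphaCondition}. Indeed, Lemma \ref{LemmaConstruct} shows that the existence of unimodular $\alpha_1,\ldots,\alpha_n$ satisfying \eqref{eq-AlphaCondition} is \emph{sufficient} for $T$ to be UECSM, while the derivation of \eqref{eq-AlphaDefinition} preceding Theorem \ref{TheoremWeak} shows it is \emph{necessary}. Thus $T$ is UECSM if and only if such $\alpha_i$ exist, and the task reduces to proving that \eqref{eq-Cocycle} is equivalent to the solvability of \eqref{eq-AlphaCondition}. The easy direction is necessity: if \eqref{eq-AlphaCondition} holds, I would substitute it into the left-hand side of \eqref{eq-Cocycle}; each index $i,j,k$ then contributes exactly one factor $\alpha$ and one factor $\overline{\alpha}$, so the unimodular constants telescope to $1$ and leave precisely $\overline{\inner{v_i,v_j}\inner{v_j,v_k}\inner{v_k,v_i}}$. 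This is a one-line computation.

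For the reverse direction I would first invoke the specialization $k=j$ of \eqref{eq-Cocycle}, which gives $|\inner{u_i,u_j}| = |\inner{v_i,v_j}|$ for all $i,j$ (this is exactly the Angle Test of Theorem \ref{TheoremWeak}); in particular $\inner{u_i,u_j} = 0$ precisely when $\inner{v_i,v_j} = 0$. For every pair with $\inner{u_i,u_j}\neq 0$ set $\omega_{ij} = \inner{u_i,u_j}/\overline{\inner{v_i,v_j}}$; the equal-moduli relation makes $\omega_{ij}$ unimodular with $\omega_{ji} = \overline{\omega_{ij}}$, and \eqref{eq-AlphaCondition} becomes the system $\alpha_j\overline{\alpha_i} = \omega_{ij}$, to be solved over the pairs with $\inner{u_i,u_j}\neq 0$ (on the remaining pairs both sides of \eqref{eq-AlphaCondition} vanish, so nothing is required).

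To solve this system I would introduce the \emph{angle graph} $\Gamma$ on vertices $\{1,\ldots,n\}$, joining $i$ and $j$ by an edge when $\inner{u_i,u_j}\neq 0$, and propagate phases from a reference vertex: set $\alpha_1 = 1$ and $\alpha_i = \omega_{1i}$ whenever $\inner{u_1,u_i}\neq 0$, continuing along a spanning tree of each connected component (and choosing $\alpha_i$ freely across distinct components, where no relation links them). Consistency on any edge $\{i,j\}$ already reachable from the reference follows from \eqref{eq-Cocycle}: the triple $\{1,i,j\}$ reads $\omega_{1i}\omega_{ij}\omega_{j1} = 1$, which rearranges to $\omega_{ij} = \omega_{1j}\overline{\omega_{1i}} = \alpha_j\overline{\alpha_i}$, exactly as demanded by \eqref{eq-AlphaCondition}. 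Once the $\alpha_i$ are produced, Lemma \ref{LemmaConstruct} finishes the proof.

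The hard part is global consistency when $\Gamma$ is neither complete nor connected, so that a spanning tree leaves non-tree edges whose fundamental cycles are longer than triangles. In cohomological terms, \eqref{eq-Cocycle} only controls triangles of $\Gamma$, and I must upgrade this to the vanishing of the $\omega$-product around \emph{every} cycle so that the phases can be propagated without ambiguity. My proposed leverage is the identity $V^*V = E(U^*U)^{-1}\overline{E}$ with $E = \operatorname{diag}(\inner{u_i,v_i})$, which follows from $V^*U = E$ (Lemma \ref{LemmaBiorthogonality}): combined with the equal-moduli condition it forces the zero pattern of $U^*U$ to coincide with that of its inverse, a rigidity that should preclude chordless cycles in $\Gamma$ and thereby let every cycle be written as a product of triangles. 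I expect this reduction --- from the three-index hypothesis \eqref{eq-Cocycle} to consistency around arbitrary cycles of $\Gamma$ --- to be the central obstacle; with it in hand, the reference-vertex construction produces the $\alpha_i$ and Lemma \ref{LemmaConstruct} completes the argument.
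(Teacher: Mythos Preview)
Your overall strategy coincides with the paper's: reduce everything to the solvability of \eqref{eq-AlphaCondition} (necessity from \eqref{eq-AlphaDefinition}, sufficiency from Lemma~\ref{LemmaConstruct}), introduce the unimodular ratios $\beta_{ij}=\inner{u_i,u_j}/\inner{v_j,v_i}$ (your $\omega_{ij}$, written with $\overline{\inner{v_i,v_j}}=\inner{v_j,v_i}$), and then manufacture the $\alpha_i$ by ``integrating'' these ratios. Your spanning-tree propagation and consistency check on triangles is exactly the paper's argument in the special case where no $\inner{u_i,u_j}$ vanishes.

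The divergence is in the hard step, and here your proposal has a genuine gap. You propose to use $V^*V=E(U^*U)^{-1}\overline E$ to force the angle graph $\Gamma$ to be chordal, so that every cycle is a product of triangles and \eqref{eq-Cocycle} controls everything. But the zero-pattern coincidence alone does \emph{not} preclude chordless cycles: for a positive-definite $4\times4$ matrix $G$ with the $4$-cycle pattern ($G_{13}=G_{24}=0$, diagonal $1$, off-diagonals $a,b,c,d$), one computes that $(G^{-1})_{13}=(G^{-1})_{24}=0$ whenever $ab=-d\bar c$ and $bc=-\bar a d$, which has plenty of solutions with $a,b,c,d\neq0$. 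So if your route is to succeed it must exploit the full modulus relations $|G_{ij}|=|(V^*V)_{ij}|$, not just the vanishing; as written, the argument stops short of this.

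The paper does not analyze $\Gamma$ at all. Instead it \emph{extends} the partially defined array $(\beta_{ij})$ to a full matrix satisfying the multiplicative rule $\beta_{ij}=\beta_{ik}\beta_{kj}$ by induction on the leading principal block: at stage $r$ one relabels so that some $\beta_{1(r+1)}$ is formula-defined (or, if no index $\le r$ is linked to $r+1$, chooses $\beta_{1(r+1)}$ freely), and then fills every undefined $\beta_{i(r+1)}$ by $\beta_{i(r+1)}:=\beta_{i1}\beta_{1(r+1)}$. The completed matrix then factors as $(\overline{\beta_{1i}})_i(\beta_{1j})_j$, and one sets $\alpha_i=\beta_{1i}$. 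The point is that for pairs with $\inner{u_i,u_j}=\inner{v_j,v_i}=0$ the constraint \eqref{eq-AlphaCondition} is vacuous, so one is free to \emph{assign} $\beta_{ij}$ however is convenient; no structural fact about $\Gamma$ is needed. Your graph-theoretic framing is a correct reformulation of this, but the chordality detour is both unproved and unnecessary---the paper's inductive filling-in is the missing ingredient.
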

	
	\begin{proof}
		The necessity of the condition \eqref{eq-Cocycle} follows immediately from \eqref{eq-AlphaDefinition}.
		The proof that \eqref{eq-Cocycle} is sufficient for $T$ to be UECSM is more complicated.	
		First observe that if \eqref{eq-Cocycle} holds for $1 \leq i \leq j \leq k \leq n$, then \eqref{eq-Cocycle}
		holds whenever $1 \leq i,j,k \leq n$ by symmetry.
		Let us assume for the moment
		that $\inner{u_i,u_j} \neq 0$ (whence $\inner{v_j,v_i} \neq 0$) for $1 \leq i,j \leq n$.  
		Later we will relax this restriction, but for the sake of clarity it will be easier to consider this special case first.
		Under this additional hypothesis, there exist $n^2$ unimodular constants
		$\beta_{ij}$ uniquely determined by 
		\begin{equation}\label{eq-NewBeta}
			\beta_{ij} = \frac{ \inner{u_i,u_j} }{ \inner{ v_j, v_i} }
		\end{equation}
		for $1 \leq i,j \leq n$.  Since $\norm{u_i} = \norm{v_i} = 1$ by hypotheses (ii) and (iii), it follows immediately that
		that $\beta_{ii}= 1$ for $1 \leq i \leq n$.  Moreover, we also have
		\begin{equation*}
			\beta_{ij} \inner{v_j,v_i} 
			= \inner{u_i,u_j} 
			= \overline{ \inner{ u_j, u_i} } 
			= \overline{ \beta_{ji} \inner{v_i,v_j} } 
			= \overline{ \beta_{ji} } \inner{v_j, v_i},
		\end{equation*}
		whence $\beta_{ij} = \overline{ \beta_{ji} }$.  In other words, the matrix $B = ( \beta_{ij} )_{i,j=1}^n$
		is selfadjoint and has constant diagonal $1$.  Suppose for the moment that, 
		based on the hypothesis \eqref{eq-Cocycle},
		we are able to establish that $B$ enjoys a factorization of the form
		\begin{equation}\label{eq-Factor}
			\begin{pmatrix}
				1	&	\beta_{12}	&	\beta_{13}	& \cdots & \beta_{1n} \\
				\beta_{21}	&	1	&	\beta_{23}	& \cdots & \beta_{2n} \\
				\beta_{31}	&	\beta_{32}	&	1	& \cdots & \beta_{3n} \\
				\vdots & \vdots & \vdots & \ddots & \vdots \\
				\beta_{n1}	&	\beta_{n2}	&	\beta_{n3} & \cdots & 1
			\end{pmatrix}	
			=
			\begin{pmatrix}
				\overline{ \alpha_1} \\ \overline{ \alpha_2} \\ \vdots \\ \overline{ \alpha_n}
			\end{pmatrix}
			\begin{pmatrix}	
				\alpha_1 & \alpha_2 & \cdots & \alpha_n
			\end{pmatrix}
		\end{equation}
		(i.e., suppose that we are able to show that $B$ is positive and has rank one).  By \eqref{eq-NewBeta}
		and the preceding factorization \eqref{eq-Factor} it would then follow that the unimodular constants 
		$\alpha_1, \alpha_2, \ldots, \alpha_n$ satisfy
		\begin{equation}\label{eq-Split}
			\inner{u_i,u_j} = \overline{\alpha_i} \alpha_j \inner{v_j,v_i}
		\end{equation}
		for $1 \leq i,j \leq n$.  At this point, we could invoke Lemma \ref{LemmaConstruct}
		to conclude that $T$ is UECSM.
		\medskip

		The difficulty in the approach outlined above lies in the fact that some of the inner products
		$\inner{u_i,u_j}$ or $\inner{v_j,v_i}$ may vanish.  If this occurs, then we cannot immediately
		consider the associated unimodular constants $\beta_{ij}$ defined by \eqref{eq-NewBeta}
		since applying \eqref{eq-Cocycle} with $k = i$ implies that 
		$\inner{ v_j,v_i} = 0$ if and only if $\inner{u_i,u_j} = 0$.  	
		On the other hand, observe that the hypothesis \eqref{eq-Cocycle} implies that
		\begin{equation*}
			\beta_{ij} \beta_{jk} \beta_{ki}
			= \frac{ \inner{u_i,u_j} }{\inner{v_j,v_i}}  \frac{ \inner{u_j,u_k} }{ \inner{v_k,v_j} }
			\frac{ \inner{u_k,u_i} }{   \inner{ v_i,v_k}  } = 1
		\end{equation*}
		holds whenever $\beta_{ij}, \beta_{jk}, \beta_{ki}$ are well-defined by \eqref{eq-NewBeta}.
		In light of the fact that each $\beta_{ij}$ is unimodular, we obtain the following 
		\emph{multiplicative property}
		\begin{equation}\label{eq-Composition}
			\beta_{ij} = \beta_{ik} \beta_{kj}
		\end{equation}
		whenever the expressions above are well-defined by \eqref{eq-NewBeta}.

		Regarding the matrix $B = ( \beta_{ij} )_{i,j=1}^n$ as being only partially defined
		by \eqref{eq-NewBeta}, suppose for the moment that we are able to define unimodular 
		constants $\beta_{ij}$ for those $i$ and $j$ for which 
		$\inner{u_i,u_j} = \inner{v_j,v_i} = 0$ such that the multiplicative property
		\eqref{eq-Composition} holds for all $1 \leq i,j,k \leq n$.  Under this hypothesis, we claim that
		the matrix $B = (\beta_{ij} )_{i,j=1}^n$ has a factorization of the form \eqref{eq-Factor}.
		Indeed, use \eqref{eq-Composition} and the fact that $\beta_{ji} = \overline{ \beta_{ij} }$ for $1 \leq i,j \leq n$
		to conclude that
		\begin{equation}\label{eq-FactorTwo}
			B=
			\begin{pmatrix}
				1	&	\beta_{12}	&	\beta_{13}	& \cdots & \beta_{1n} \\
				\beta_{21}	&	1	&	\beta_{23}	& \cdots & \beta_{2n} \\
				\beta_{31}	&	\beta_{32}	&	1	& \cdots & \beta_{3n} \\
				\vdots & \vdots & \vdots & \ddots & \vdots \\
				\beta_{n1}	&	\beta_{n2}	&	\beta_{n3} & \cdots & 1
			\end{pmatrix}
			=
			\begin{pmatrix}	
				1	\\	\overline{\beta_{12} }	\\	\overline{ \beta_{13}} 
				\\ \vdots \\ \overline{ \beta_{1n} }\\
			\end{pmatrix}
			\begin{pmatrix}	
				1	&	\beta_{12}	&	\beta_{13}	& \cdots & \beta_{1n} \\
			\end{pmatrix}
			.
		\end{equation}
		As suggested by \eqref{eq-Factor}, we now define
		the unimodular constants $\alpha_1, \alpha_2, \ldots, \alpha_n$ by setting
		$\alpha_i = \beta_{1i}$ for $1 \leq i \leq n$.  		
		Next observe that
		\begin{equation*}
			\overline{\alpha_i} \alpha_j
			= \overline{\beta_{1i} } \beta_{1j}
			= \beta_{i1} \beta_{1j}			
			=\beta_{ij} = \frac{ \inner{u_i,u_j} }{ \inner{ v_j,v_i} }
		\end{equation*}
		holds whenever $\beta_{ij}$ is well-defined by \eqref{eq-NewBeta}.
		Thus the desired condition \eqref{eq-Split} holds for all $1 \leq i,j \leq n$ (since it
		holds trivially if $\inner{u_i,u_j} = \inner{v_j,v_i} = 0$) and $T$ is UECSM
		by Lemma \ref{LemmaConstruct}.
		\medskip
		
		To complete the proof of Theorem \ref{TheoremStrong}
		it suffices to demonstrate a procedure by which we may
		define unimodular constants $\beta_{ij}$ for those $i$ and $j$ for which 
		$\inner{u_i,u_j} = \inner{v_j,v_i} = 0$ such that the multiplicative property
		\eqref{eq-Composition} holds for all $1 \leq i,j \leq n$.  This will lead us to the
		desired matrix factorization \eqref{eq-FactorTwo}.
		
			 To define the constants $\beta_{ij}$ we employ an inductive procedure. 
		Consider the partially defined $n \times n$ matrix
		\begin{equation}\label{eq-Goal}
			\small
			\left(
				\begin{array}{ccccc|ccc}
					1 & \beta_{12} & \beta_{13} & \cdots & \beta_{1r} & * & \cdots & *\\
					\beta_{21} & 1 & \beta_{23} & \cdots & \beta_{2r} & * & \cdots & *\\
					\beta_{31} & \beta_{32} & 1 & \cdots & \beta_{3r} & * & \cdots & *\\
					\vdots & \vdots & \vdots & \ddots & \vdots &\vdots & \ddots & \vdots\\
					\beta_{r1} & \beta_{r2} & \beta_{r3} & \cdots & 1 & * & \cdots & *\\
					\hline
					* & * & * & \cdots & * & 1 & \cdots & *\\
					\vdots & \vdots & \vdots & \ddots & \vdots &\vdots & \ddots & \vdots\\
					* & * & * & \cdots & * & * & \cdots & 1\\
				\end{array}
			\right)
		\end{equation}
		where $*$ indicates either an entry $\beta_{ij}$ already defined by \eqref{eq-NewBeta} 
		or an entry that is not defined in terms of \eqref{eq-NewBeta} because $\inner{u_i,u_j} = \inner{ v_j,v_i} = 0$.
		As our inductive hypothesis, we assume that the multiplicative property \eqref{eq-Composition} is satisfied by
		all triples $\beta_{ij}, \beta_{ik}, \beta_{kj}$ for which $1 \leq i,j,k \leq r$.

		To complete the proof Theorem \ref{TheoremStrong},
		we must devise a way to fill out the undefined entries in \eqref{eq-Goal} with unimodular constants
		$\beta_{ij}$ in such a way that \eqref{eq-Composition} holds for these new entries.  
		There are two cases to consider:
		\medskip

		\noindent\textbf{Case 1}: 	Suppose that there exists an entry 
		$\beta_{i(r+1)}$ with $1 \leq i \leq r$ in \eqref{eq-Goal}
		that is already defined by \eqref{eq-NewBeta}.
		Without loss of generality, we may assume that it is the 
		$\beta_{1(r+1)}$ is the entry that is well-defined by \eqref{eq-NewBeta}
		since this situation may be obtained by permuting the indices $1,2,\ldots, r$ and relabeling the eigenvectors
		$u_1,u_2,\ldots,u_{r}; v_1,v_2,\ldots,v_{r}$.  We
		are thus left with the partially completed matrix
		\begin{equation*}
			\small
			\left(
				\begin{array}{ccccc|c|ccc}
					1 & \beta_{12} & \beta_{13} & \cdots & \beta_{1r} & \beta_{1(r+1)} &* & \cdots & *\\
					\beta_{21} & 1 & \beta_{23} & \cdots & \beta_{2r} & * & * & \cdots & *\\
					\beta_{31} & \beta_{32} & 1 & \cdots & \beta_{3r} & * & * & \cdots & *\\
					\vdots & \vdots & \vdots & \ddots & \vdots &\vdots & \vdots & \ddots & \vdots\\
					\beta_{r1} & \beta_{r2} & \beta_{r3} & \cdots & 1 &  * &* & \cdots & *\\
					\hline
					\beta_{(r+1) 1} & * & * & \cdots 
						& * &1 &* & \cdots & *\\
					\hline
					* & * & * & \cdots & * & *&1 & \cdots & *\\
					\vdots & \vdots & \vdots & \ddots & \vdots &\vdots & \vdots & \ddots & \vdots\\
					* & * & * & \cdots & * & * & *& \cdots & 1\\
				\end{array}
			\right).
		\end{equation*}
		For each entry $\beta_{i(r+1)}$ with $2 \leq i \leq r$ (i.e., the entries immediately below
		$\beta_{1(r+1)}$ and above the $1$ on the main diagonal)
		there are two possibilities:

		\medskip
		\noindent\textbf{Subcase 1.1}:
		If $\beta_{i(r+1)}$ is already well-defined by \eqref{eq-NewBeta}, then do nothing.

		\medskip
		\noindent\textbf{Subcase 1.2}:
		If $\beta_{i(r+1)}$ cannot be defined by \eqref{eq-NewBeta} because 
		$\inner{u_i,u_j} = \inner{ v_j,v_i} = 0$, then let
		\begin{equation}\label{eq-TauStep}
			\beta_{i(r+1)} := \beta_{i1} \beta_{1(r+1)}
		\end{equation}
		to obtain the partially defined matrix\footnote{The entries 
		$\beta_{(r+1) 1} , \beta_{(r+1) 2} , \beta_{(r+1) 3} , \ldots,   \beta_{r(r+1)}$ in the $(r+1)$st
		row are defined by conjugate symmetry:  $\beta_{ij} = \overline{\beta_{ji}}$.}
		\begin{equation}\label{eq-OneStep}
			\small
			\left(
				\begin{array}{ccccc|c|ccc}
					1 & \beta_{12} & \beta_{13} & \cdots & \beta_{1r} & \beta_{1(r+1)} &* & \cdots & *\\
					\beta_{21} & 1 & \beta_{23} & \cdots & \beta_{2r} & \beta_{2(r+1)} & * & \cdots & *\\
					\beta_{31} & \beta_{32} & 1 & \cdots & \beta_{3r} & \beta_{3(r+1)} & * & \cdots & *\\
					\vdots & \vdots & \vdots & \ddots & \vdots &\vdots & \vdots & \ddots & \vdots\\
					\beta_{r 1} & \beta_{r 2} & \beta_{r3} & \cdots & 1 &  \beta_{r(r+1)} &* & \cdots & *\\
					\hline
					\beta_{(r+1) 1} & \beta_{(r+1) 2} & \beta_{(r+1) 3} & \cdots 
						&   \beta_{(r+1)r} &1 &* & \cdots & *\\
					\hline
					* & * & * & \cdots & * & *&1 & \cdots & *\\
					\vdots & \vdots & \vdots & \ddots & \vdots &\vdots & \vdots & \ddots & \vdots\\
					* & * & * & \cdots & * & * & *& \cdots & 1\\
				\end{array}
			\right).
		\end{equation}
		\medskip

		\noindent\textbf{Case 2}: 	Suppose that there does not exist an entry 
		$\beta_{i(r+1)}$ with $1 \leq i \leq r$ in \eqref{eq-Goal}
		that is already defined by \eqref{eq-NewBeta}.
		In other words, suppose that $\inner{u_i,u_{r+1}} = \inner{v_{r+1},v_i} = 0$
		whenever $1 \leq i \leq r$.  We
		are thus left with the partially completed matrix
		\begin{equation*}
			\small
			\left(
				\begin{array}{ccccc|c|ccc}
					1 & \beta_{12} & \beta_{13} & \cdots & \beta_{1r} & * &* & \cdots & *\\
					\beta_{21} & 1 & \beta_{23} & \cdots & \beta_{2r} & * & * & \cdots & *\\
					\beta_{31} & \beta_{32} & 1 & \cdots & \beta_{3r} & * & * & \cdots & *\\
					\vdots & \vdots & \vdots & \ddots & \vdots &\vdots & \vdots & \ddots & \vdots\\
					\beta_{r 1} & \beta_{r 2} & \beta_{r 3} & \cdots & 1 &  * &* & \cdots & *\\
					\hline
					* & * & * & \cdots 
						& * &1 &* & \cdots & *\\
					\hline
					* & * & * & \cdots & * & *&1 & \cdots & *\\
					\vdots & \vdots & \vdots & \ddots & \vdots &\vdots & \vdots & \ddots & \vdots\\
					* & * & * & \cdots & * & * & *& \cdots & 1\\
				\end{array}
			\right).
		\end{equation*}
		Select a complex number of unit modulus and assign this value to $\beta_{1(r+1)}$.
		Having done this, we define $\beta_{i(r+1)}$ for $1\leq i \leq r$ as in \eqref{eq-TauStep}
		to obtain a partially defined matrix
		of the form \eqref{eq-OneStep}.
		\medskip
		
		To wrap-up the proof, we must show that in either case \eqref{eq-TauStep} 
		defines the new entries $\beta_{i(r+1)}$ in a manner which is consistent with the multiplicative
		property \eqref{eq-Composition}.
		For $1 \leq i,k \leq r$ we employ the definition \eqref{eq-TauStep} to find that
		\begin{align*}
			\beta_{ik} \beta_{k(r+1)} 
			&= (\beta_{i1} \beta_{1k})( \beta_{k1} \beta_{1(r+1)})   && \text{by inductive hypothesis and \eqref{eq-TauStep}}\\
			&= \beta_{i1} (\beta_{1k} \overline{\beta_{1k}}) \beta_{1(r+1)} &&\text{hermitian symmetry}\\
			&= \beta_{i1} \beta_{1(r+1)} && \text{since $|\beta_{1k}|=1$}\\
			&= \beta_{i(r+1)}.  &&\text{by \eqref{eq-TauStep}}
		\end{align*}
		Thus \eqref{eq-TauStep} defines $\beta_{i(r+1)}$ for $1 \leq i,j \leq r+1$
		in a manner consistent with \eqref{eq-Composition}.
	
		Starting with the upper left $1 \times 1$ block, repeated applications of the preceding inductive procedure
		eventually yields an $n \times n$ matrix $B = ( \beta_{ij} )_{i,j=1}^n$ whose entries
		satisfy the required multiplicative condition \eqref{eq-Composition}.  This concludes
		the proof of Theorem \ref{TheoremStrong}.
	\end{proof}

	\begin{Corollary}\label{Corollary2x2}
		Every $2 \times 2$ matrix is UECSM.
	\end{Corollary}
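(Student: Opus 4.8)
The plan is to treat a $2 \times 2$ matrix $T$ in two cases according to whether its eigenvalues are distinct. In the generic case I would apply the Strong Angle Test (Theorem~\ref{TheoremStrong}). When $n = 2$ the only index triples $1 \le i \le j \le k \le 2$ that are not constant are $(1,1,2)$ and $(1,2,2)$, and by the footnote to Theorem~\ref{TheoremStrong} each of these collapses to the single scalar equation $|\inner{u_1,u_2}| = |\inner{v_1,v_2}|$. Thus for a $2 \times 2$ matrix with distinct eigenvalues the entire test reduces to verifying this one identity, and the content of the corollary (in this case) is that it holds automatically.

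The heart of the argument is therefore to show $|\inner{u_1,u_2}| = |\inner{v_1,v_2}|$ in $\C^2$. By Lemma~\ref{LemmaBiorthogonality} we have $\inner{u_2,v_1} = 0$ and $\inner{u_1,v_2} = 0$, so $v_1$ is a unit vector orthogonal to $u_2$ and $v_2$ is a unit vector orthogonal to $u_1$. Since the orthogonal complement of a line in $\C^2$ is again a line, $v_1$ and $v_2$ are determined up to unimodular factors by $u_2$ and $u_1$. I would then exploit the fact that the orthogonal-complement map $w = (w_1,w_2) \mapsto w^{\perp} := (\overline{w_2}, -\overline{w_1})$ is a conjugate-linear isometry of $\C^2$, whence $\inner{x^{\perp},y^{\perp}} = \inner{y,x}$ for all $x,y \in \C^2$. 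A direct computation gives $\inner{u_1^{\perp},u_2^{\perp}} = \inner{u_2,u_1}$, so that
\begin{equation*}
|\inner{v_1,v_2}| = |\inner{u_1^{\perp},u_2^{\perp}}| = |\inner{u_2,u_1}| = |\inner{u_1,u_2}|.
\end{equation*}
This verifies the Strong Angle Test and establishes the corollary for every $2 \times 2$ matrix with distinct eigenvalues.

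It remains to handle a repeated eigenvalue, which lies outside the hypotheses of Theorem~\ref{TheoremStrong}. The cleanest route is a density argument: matrices with distinct eigenvalues are dense in $M_2(\C)$, and the set of UECSM matrices is closed. The latter follows because if $T_k = U_k^* S_k U_k \to T$ with $U_k$ unitary and $S_k$ symmetric, then compactness of the unitary group lets us pass to a subsequence with $U_k \to U$, so $S_k = U_k T_k U_k^* \to U T U^*$, forcing $U T U^*$ to be symmetric and $T = U^*(UTU^*)U$ to be UECSM. Combining closedness with the distinct-eigenvalue case already proved yields the result for all of $M_2(\C)$. Alternatively, one may argue directly: a repeated-eigenvalue $T$ is either $\lambda I$, which is already symmetric, or is unitarily equivalent by Schur's theorem to a single Jordan block $\minimatrix{\lambda}{c}{0}{\lambda}$ with $c \ne 0$, which shares its unique nonzero singular value $|c|$ with the complex symmetric matrix $\minimatrix{\lambda+it}{t}{t}{\lambda-it}$ for $t = |c|/2$, and is therefore unitarily equivalent to it.

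I expect the main obstacle to be pinning down the identity of the second paragraph, that is, recognizing that passing to orthogonal complements in $\C^2$ is a conjugate-linear isometry and hence leaves the moduli of inner products unchanged. Everything else is either a reduction supplied by the footnote to Theorem~\ref{TheoremStrong} or the routine degenerate-case bookkeeping handled above.
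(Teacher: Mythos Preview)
Your proposal is correct and follows the same overall architecture as the paper: reduce the Strong Angle Test in dimension two to the single identity $|\inner{u_1,u_2}|=|\inner{v_1,v_2}|$, verify that identity, and then treat the repeated-eigenvalue case separately. The execution of both steps, however, differs from the paper in ways worth noting.

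For the distinct-eigenvalue case, the paper first applies Schur's theorem and an affine normalization to reduce to $T=\minimatrix{0}{a}{0}{1}$, then computes $u_1,u_2,v_1,v_2$ explicitly and checks $|\inner{u_1,u_2}|=|\inner{v_1,v_2}|$ by direct evaluation. Your argument is more conceptual and avoids any coordinate reduction: you observe from Lemma~\ref{LemmaBiorthogonality} that $v_1\in\{u_2\}^{\perp}$ and $v_2\in\{u_1\}^{\perp}$, and then use that the map $w\mapsto w^{\perp}=(\overline{w_2},-\overline{w_1})$ is a conjugate-linear isometry of $\C^2$ to conclude $|\inner{v_1,v_2}|=|\inner{u_2^{\perp},u_1^{\perp}}|=|\inner{u_1,u_2}|$. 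This is cleaner and makes transparent why the identity is automatic in two dimensions.

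For the repeated-eigenvalue case, the paper reduces via Schur to a scalar multiple of a $2\times2$ nilpotent Jordan block and exhibits the conjugation $C(z_1,z_2)=(\overline{z_2},\overline{z_1})$ directly. Your primary route---density of distinct-eigenvalue matrices together with closedness of the UECSM class via compactness of the unitary group---is a legitimate alternative. One small slip in your backup argument: the Jordan block $\minimatrix{\lambda}{c}{0}{\lambda}$ does \emph{not} have singular values $0$ and $|c|$ when $\lambda\neq0$; you should first subtract $\lambda I$ (which preserves the UECSM property) and compare the resulting nilpotent matrices, both of which then have singular values $0$ and $|c|$. With that adjustment the alternative argument also goes through.
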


	\begin{proof}
		Let $T$ be a $2 \times 2$ matrix.
		If $T$ has a repeated eigenvalue
		$\lambda$, then by Schur's Theorem on Unitary Upper Triangularization, it follows that
		$T - \lambda I$ is unitarily equivalent to a scalar multiple of a $2 \times 2$ nilpotent Jordan matrix.
		This Jordan matrix is $C$-symmetric with respect to $C(z_1,z_2) = (\overline{z_2}, \overline{z_1})$
		whence $T$ is UECSM.  We therefore restrict our attention to the 
		case where $T$ has two distinct eigenvalues.  Upon applying Schur's Theorem, 
		subtracting a suitable multiple of the identity, and normalizing, we may assume that
		\begin{equation}\label{eq-2x2SchurReduction}
			T = \minimatrix{0}{a}{0}{1}
		\end{equation}
		for some complex constant $a$.  A short computation reveals that 
		normalized eigenvectors for $T$ corresponding to the eigenvalues $\lambda_1 = 0$ and $\lambda_2=1$ are
		\begin{equation}\label{eq-2x2u}
			u_1 = \twovector{1}{0}, \quad 
			u_2 = \twovector{\frac{a}{ \sqrt{1 + \left | a \right |^2}}}{\frac{1}{ \sqrt{1 + \left | a \right |^2}}}.
		\end{equation}
		Similarly, we find that corresponding normalized eigenvectors for $T^*$ are given by
		\begin{equation}\label{eq-2x2v}
			v_1 = \twovector{\frac{ 1}{ \sqrt{1 + | a |^2}} }{\frac{ -\overline{a} }{ \sqrt{1 +  |a |^2}} }, \qquad
			v_2 = \twovector{0}{1}.
		\end{equation}
		By Theorem \ref{TheoremStrong}, $T$ is UECSM if and only if \eqref{eq-Cocycle} holds for
		all $1 \leq i \leq j \leq k \leq 2$ such that not all of $i,j,k$ are equal.  This leaves us 
		only two ordered triples $(i,j,k)$ to consider:
		$(1,1,2)$ and $(1,2,2)$.  These values of $i,j,k$ both lead to the condition
		$| \inner{u_1,u_2} | = | \inner{ v_1,v_2} |$ which needs to be verified.  Since
		\begin{equation*}
			| \inner{u_1,u_2} | = \frac{ |a|}{ \sqrt{1 +  |a|^2}} = | \inner{ v_1,v_2} |
		\end{equation*}
		follows immediately from \eqref{eq-2x2u} and \eqref{eq-2x2v}, we conclude that $T$ is UECSM.
	\end{proof}

	The preceding corollary has been proved in a number of different ways by several different authors.
	For instance, one can reduce to the special case \eqref{eq-2x2SchurReduction} as above
	and then construct the corresponding conjugation by straightforward computation \cite[Ex.~6]{CSOA}.  
	The procedure developed by J.~Tener can also be used to establish Corollary \ref{Corollary2x2}  \cite[Cor.~3]{Tener}.
	We should also mention that Corollary \ref{Corollary2x2} is the byproduct of more sophisticated theorems.  
	For instance, it follows immediately from N.~Chevrot, E.~Fricain, and D.~Timotin's study of the characteristic 
	functions of complex symmetric contractions \cite[Cor.~3.3]{Chevrot}.  
	More recently, the second author and W.~Wogen established that
	every \emph{binormal} operator (i.e., an operator that is unitarily equivalent to a $2 \times 2$ block operator whose entries
	are commuting normal operators) is complex symmetric \cite{SNCSO}.  Corollary \ref{Corollary2x2} is
	a special case of this result.

\section{A few examples}\label{SectionExamples}

	To illustrate the preceding ideas, we devote this section to the detailed consideration of
	several examples.  In particular, Example \ref{ExampleCounter} demonstrates that 
	none of the simple conditions given in Section \ref{SectionAngle} is sufficient for $T$ to be UECSM.

	\begin{Example}\label{Example2x2}
		Let $T$ be a $2 \times 2$ matrix with eigenvalues $\lambda_1 \neq \lambda_2$.
		As before, let $u_1,u_2$ denote normalized eigenvectors of $T$ corresponding to the eigenvalues $\lambda_1, \lambda_2$,
		respectively and let $v_1,v_2$ denote normalized eigenvectors of $T^*$
		corresponding to the eigenvalules $\overline{ \lambda_1}, \overline{\lambda_2}$, respectively.

		By Corollary \ref{Corollary2x2}, we know that $T$ is UECSM and hence 
		$|\inner{u_1,u_2}| = |\inner{v_2,v_1}|$.  We may therefore find 
		unimodular constants $\alpha_1$ and $\alpha_2$ such that
		\begin{equation*}
			\inner{u_1,u_2} = \overline{ \alpha_1} \alpha_2 \inner{ v_2, v_1}.
		\end{equation*}
		For instance, if $T$ is normal, then
		we may simply set $\alpha_1 = \alpha_2 = 1$ since
		$\inner{u_1,u_2} = \inner{v_2,v_1} = 0$.  
		Letting
		\begin{equation*}
			U = \left(
			\begin{array}{c|c}
			u_{11} & u_{12} \\
			u_{21} & u_{22}
			\end{array}
			\right)
		\end{equation*}
		denote the matrix whose columns are the eigenvectors $u_1,u_2$ of $T$ we follow
		the procedure outlined at the end of Section \ref{SectionConstructing} 
		to construct the conjugation
		\begin{equation}\label{eq-2x2SMatrixMaster}
			C \twovector{x}{y} =
			\minimatrix
			{  \dfrac{ \alpha_1 u_{11}^2}{ \inner{u_1,v_1} }  + \dfrac{ \alpha_2 u_{12}^2}{ \inner{u_2,v_2} } }
			{  \dfrac{ \alpha_1 u_{11} u_{21}}{ \inner{u_1,v_1} }  + \dfrac{ \alpha_2 u_{12} u_{22}}{ \inner{u_2,v_2} } }
			{  \dfrac{ \alpha_1 u_{11} u_{21}}{ \inner{u_1,v_1} }  + \dfrac{ \alpha_2 u_{12} u_{22}}{ \inner{u_2,v_2} } }
			{  \dfrac{ \alpha_1 u_{21}^2}{ \inner{u_1,v_1} }  + \dfrac{ \alpha_2 u_{22}^2}{ \inner{u_2,v_2} } }
			\twovector{ \overline{x} }{ \overline{y} }
		\end{equation}
		with respect to which $T$ is $C$-symmetric.
	\end{Example}

	\begin{Example}
		Applying the preceding formula to the matrix \eqref{eq-2x2SchurReduction}, where $a \neq 0$,
		and using the data \eqref{eq-2x2u} and \eqref{eq-2x2v} we find that
		\begin{equation*}
			\inner{ u_1, u_2} = \frac{ \overline{a} }{ \sqrt{1 + |a|^2} }, \qquad
			\inner{v_2,v_1} =  -\frac{ a }{ \sqrt{1 + |a|^2} }.
		\end{equation*}
		Following the notation of Example \ref{Example2x2}, note that
		\begin{align*}
			u_{11} &= 1   &u_{12} &= \frac{a}{ \sqrt{1 + |a|^2} }, \\
			u_{21} &= 0   &u_{22} &= \frac{1}{ \sqrt{1 + |a|^2} }.
		\end{align*}
		One possible solution\footnote{The other solutions will simply yield a unimodular multiple of $S$.} to the equation
		\begin{equation*}
			\inner{u_1,u_2} = \overline{ \alpha_1} \alpha_2 \inner{v_2,v_1}
		\end{equation*}
		is given by
		\begin{equation*}
			\alpha_1 = \frac{a}{|a|}, \qquad
			\alpha_2 = - \frac{ \overline{a} }{|a|}.
		\end{equation*}
		Plugging this data into \eqref{eq-2x2SMatrixMaster} we obtain the conjugation
		\begin{equation*}
			C \twovector{x}{y}=
			\minimatrix{ \frac{ a/|a| }{ \sqrt{1+ |a|^2} } }{ - \frac{ |a| }{\sqrt{1+|a|^2}}}
			{ - \frac{ |a| }{\sqrt{1+|a|^2}}}{  \frac{ -\overline{a}/|a| }{ \sqrt{1 + |a|^2} } }
			\twovector{ \overline{x} }{ \overline{y}}
		\end{equation*}
		with respect to which the matrix \eqref{eq-2x2SchurReduction} is $C$-symmetric.
	\end{Example}

	\begin{Example}
		In \cite[Ex.~3]{Tener}, the matrix 
		\begin{equation*}
			T = \megamatrix{0}{7}{0}{0}{1}{-5}{0}{0}{6}
		\end{equation*}
		is demonstrated to be UECSM via Tener's \UECSMTest{}.  For the sake of comparison,
		let us also consider this matrix using the techniques discussed above.  Letting 
		$\lambda_0 = 6$, $\lambda_1 = 1$, and $\lambda_2 = 0$, we obtain the corresponding
		normalized eigenvectors
		\begin{equation*}
			u_1 = \threevector{ - \frac{7}{11} }{ - \frac{6}{11} }{ \frac{6}{11}},\qquad
			u_2 = \threevector{ \frac{7}{5\sqrt{2}} }{ \frac{1}{ 5\sqrt{2}}}{0}, \qquad
			u_3 = \threevector{1}{0}{0}
		\end{equation*}
		of $T$ and
		\begin{equation*}
			v_1 = \threevector{0}{0}{1}, \qquad
			v_2 = \threevector{0}{ \frac{1}{\sqrt{2}} }{ \frac{1}{\sqrt{2} } }, \qquad
			v_3 = \threevector{ - \frac{6}{55} }{ \frac{42}{55} }{ \frac{7}{11} }
		\end{equation*}
		of $T^*$, respectively.  A short computation reveals that
		\begin{align*}
			\inner{u_1,u_2} &= -\frac{1}{\sqrt{2}}, 
			&\inner{u_2,u_3} &= \frac{7}{5 \sqrt{2}},
			&\inner{u_3,u_1} &= -\frac{7}{11},\\
			\inner{v_1,v_2} &= \frac{1}{\sqrt{2}}, 
			&\inner{v_2,v_3} &= \frac{7}{5 \sqrt{2}},
			&\inner{v_3,v_1} &= \frac{7}{11},
		\end{align*}
		whence it is clear that \eqref{eq-Cocycle} holds for all triples
		\begin{equation*}
			(i,j,k) = (1,1,2), (1,1,3), (1,2,2), (1,2,3), (1,3,3), (2,2,3), (2,3,3)
		\end{equation*}
		required by \Test.  In particular, this proves that $T$ is UECSM.

		The corresponding matrices $U = (u_1 | u_2 | u_3)$ and $V = (v_1 | v_2| v_3)$ are
		\begin{equation*}
			U = 
			\left(
			\begin{array}{c|c|c}
				 -\frac{7}{11} & \frac{7}{5 \sqrt{2}} & 1 \\
				 -\frac{6}{11} & \frac{1}{5 \sqrt{2}} & 0 \\
				 \frac{6}{11} & 0 & 0
			\end{array}
			\right)
			\qquad
			V = 
			\left(
			\begin{array}{c|c|c}
				 0 & 0 & -\frac{6}{55} \\
				 0 & \frac{1}{\sqrt{2}} & \frac{42}{55} \\
				 1 & \frac{1}{\sqrt{2}} & \frac{7}{11}
			\end{array}
			\right).
		\end{equation*}
		As expected, $T$ passes the Parallelepiped Test (Corollary \ref{CorollaryPT}) since
		\begin{equation*}
			|\det U| = |\det V| = \frac{3\sqrt{2}}{55} .
		\end{equation*}
		Next, observe that
		\begin{equation*}
			U^*U =
			\begin{pmatrix}
				1 & -\frac{1}{\sqrt{2}} & -\frac{7}{11} \\
				-\frac{1}{\sqrt{2}} & 1 & \frac{7}{5 \sqrt{2}} \\
				-\frac{7}{11} & \frac{7}{5 \sqrt{2}} & 1
			\end{pmatrix},
			\qquad
			V^*V =
			\begin{pmatrix}
				 1 & \frac{1}{\sqrt{2}} & \frac{7}{11} \\
				 \frac{1}{\sqrt{2}} & 1 & \frac{7}{5 \sqrt{2}} \\
				 \frac{7}{11} & \frac{7}{5 \sqrt{2}} & 1
			\end{pmatrix}
		\end{equation*}	
		whence $T$ passes the Grammian Test (Corollary \ref{CorollaryGrammian}) with the 
		$A$ from \eqref{eq-Grammian} being
		\begin{equation*}
			A = \megamatrix{1}{0}{0}{0}{-1}{0}{0}{0}{-1} = \megamatrix{\alpha_1}{0}{0}{0}{\alpha_2}{0}{0}{0}{\alpha_3}.
		\end{equation*}
		In particular, this once again confirms that $T$ is UECSM.
		
		Let us, for the moment, examine the mechanics of the proof of Theorem \ref{TheoremStrong},
		which establishes the theoretical underpinnings of the procedure \Test.  Using the data above, we find that the
		matrix $B = (\beta_{ij})$ from the proof of Theorem \ref{TheoremStrong} is given by
		\begin{align*}
			B 
			= \frac{ (U^*U)^t}{V^*V}
			= \megamatrix{1}{-1}{-1}{-1}{1}{1}{-1}{1}{1} 
			= \threevector{1}{-1}{-1}(1 \,\,\,-1\,\,\,-1)
		\end{align*}
		whence we again read that $\alpha_1 = 1, \alpha_2 = -1, \alpha_3 = -1$.  We remind the reader that 
		the quotient appearing in the preceding equation is simply  the entry-by-entry quotient of 
		the matrices $(U^*U)^t$ and $V^*V$.

		Based upon the preceding calculations, we can construct the corresponding conjugation 
		$C = SJ$ where $S$ is a complex symmetric unitary matrix which is to be determined (this matrix also
		has the property that $T = ST^t S^*$).
		Following the recipe described at the end of Section \ref{SectionConstructing} we obtain
		\begin{equation*}
			E = V^*U = 
			\begin{pmatrix}
				 \frac{6}{11} & 0 & 0 \\
				 0 & \frac{1}{10} & 0 \\
				 0 & 0 & -\frac{6}{55}
			\end{pmatrix}
		\end{equation*}
		so that
		\begin{equation*}
			D = AE^{-1} = 
			\begin{pmatrix}
				 \frac{11}{6} & 0 & 0 \\
				 0 & -10 & 0 \\
				 0 & 0 & \frac{55}{6}
			\end{pmatrix}
		\end{equation*}
		Putting this all together we find that
		\begin{equation*}
			S = UDU^t = 
			\begin{pmatrix}
				 \frac{6}{55} & -\frac{42}{55} & -\frac{7}{11} \\[3pt]
				 -\frac{42}{55} & \frac{19}{55} & -\frac{6}{11} \\[3pt]
				 -\frac{7}{11} & -\frac{6}{11} & \frac{6}{11}
			\end{pmatrix}.
		\end{equation*}				
		It turns out that our $S$ differs from the corresponding matrix obtained in 
		\cite[Ex.~3]{Tener} by a unimodular multiplicative factor 
		of $\frac{ -19 + 6 i \sqrt{74} }{55}$.
	\end{Example}

	The following important example demonstrates that the
	Angle Test (Theorem \ref{TheoremWeak}), the Grammian Test (Corollary \ref{CorollaryGrammian}), and
	the Parallelepiped Test (Corollary \ref{CorollaryPT}) are insufficient to determine
	whether a given matrix is UECSM.  In particular, this demonstrates the utility of the
	Strong Angle Test (Theorem \ref{TheoremStrong}), which provides a necessary
	\emph{and} sufficient condition.

	\begin{Example}\label{ExampleCounter}
		Consider the matrix
		\begin{equation*}
			T=
			\begin{pmatrix}
				 5 & 0 & -1 & 3 \\
				 2 & 4 & 1 & 2 \\
				 2 & -2 & 6 & -2 \\
				 0 & -2 & 1 & 4
			\end{pmatrix},
		\end{equation*}
		which has the distinct eigenvalues
		\begin{equation*}
			\lambda_1 = 5+i \sqrt{5},\quad
			\lambda_2 = 5-i \sqrt{5}, \quad
			\lambda_3 = \tfrac{1}{2} (9+i \sqrt{15}), \quad
			\lambda_4 = \tfrac{1}{2} (9-i \sqrt{15}),
		\end{equation*}
		and corresponding unit eigenvectors
		\begin{equation*}
			\underbrace{
			\begin{pmatrix}
				 -\frac{2 i}{3}-\frac{1}{3 \sqrt{5}} \\
				 \frac{1}{15} \left(-5 i-2 \sqrt{5}\right) \\
				 \frac{1}{15} \left(5 i-\sqrt{5}\right) \\
				 \frac{1}{\sqrt{5}}
			\end{pmatrix}
			}_{u_1},\quad 
			\underbrace{
			\begin{pmatrix}
				 \frac{2 i}{3}-\frac{1}{3 \sqrt{5}} \\
				 \frac{1}{15} \left(5 i-2 \sqrt{5}\right) \\
				 \frac{1}{15} \left(-5 i-\sqrt{5}\right) \\
				 \frac{1}{\sqrt{5}}
			\end{pmatrix}
			}_{u_2},\quad 
			\underbrace{
			\begin{pmatrix}
				 -\frac{i \left(-5 i+\sqrt{15}\right)}{2 \sqrt{30}} \\
				 0 \\
				 \frac{i}{\sqrt{2}}+\frac{1}{\sqrt{30}} \\
				 \sqrt{\frac{2}{15}}
			\end{pmatrix}
			}_{u_3},\quad 
			\underbrace{
			\begin{pmatrix}
				 \frac{i \left(5 i+\sqrt{15}\right)}{2 \sqrt{30}} \\
				 0 \\
				 -\frac{i}{\sqrt{2}}+\frac{1}{\sqrt{30}} \\
				 \sqrt{\frac{2}{15}}
			\end{pmatrix}
			}_{u_4}.
		\end{equation*}
		The matrix $T^*$ has the eigenvalues
		\begin{equation*}
			\overline{\lambda_1} = 5-i \sqrt{5},\quad
			\overline{\lambda_2} = 5+i \sqrt{5}, \quad
			\overline{\lambda_3} = \tfrac{1}{2} (9-i \sqrt{15}), \quad
			\overline{\lambda_4} = \tfrac{1}{2} (9+i \sqrt{15}),
		\end{equation*}
		and corresponding unit eigenvectors
		\begin{equation*}
			\underbrace{
			\begin{pmatrix}
				 \frac{2}{\sqrt{15}} \\
				 \frac{-1-i \sqrt{5}}{\sqrt{15}} \\
				 \frac{1}{\sqrt{15}} \\
				 \frac{2}{\sqrt{15}}
			\end{pmatrix}
			}_{v_1},\quad
			\underbrace{
			\begin{pmatrix}
				 \frac{2}{\sqrt{15}} \\
				 \frac{i \left(i+\sqrt{5}\right)}{\sqrt{15}} \\
				 \frac{1}{\sqrt{15}} \\
				 \frac{2}{\sqrt{15}}
			\end{pmatrix}
			}_{v_2},\quad
			\underbrace{
			\begin{pmatrix}
				 \frac{i}{2}-\frac{1}{2 \sqrt{15}} \\
				 \frac{1}{10} \left(-5 i+\sqrt{15}\right) \\
				 \frac{1}{30} \left(15 i+\sqrt{15}\right) \\
				 \frac{1}{\sqrt{15}}
			\end{pmatrix}
			}_{v_3},\quad
			\underbrace{
			\begin{pmatrix}
				 -\frac{i}{2}-\frac{1}{2 \sqrt{15}} \\
				 \frac{1}{10} \left(5 i+\sqrt{15}\right) \\
				 \frac{1}{30} \left(-15 i+\sqrt{15}\right) \\
				 \frac{1}{\sqrt{15}}
			\end{pmatrix}
			}_{v_4}.
		\end{equation*}
		The matrices $U = (u_1|u_2|u_3|u_4)$ and $V = (v_1|v_2|v_3|v_4)$ satisfy
		\begin{equation*}
			|\det U| = \frac{2}{5 \sqrt{3}} = |\det V|
		\end{equation*}
		whence $T$ passes the Parallelepiped Test (Corollary \ref{CorollaryPT}).

		A further computation reveals that the matrices
		\begin{align*}
			U^*U &=\tiny
			\begin{pmatrix}
				 1 & -\frac{1}{3}-\frac{2 i}{3 \sqrt{5}} & \frac{4+\sqrt{3}}{6 \sqrt{2}}-\frac{i \left(1+4 \sqrt{3}\right)}{6 \sqrt{10}} 
				 & \frac{-4+\sqrt{3}}{6 \sqrt{2}}-\frac{i \left(-1+4 \sqrt{3}\right)}{6 \sqrt{10}} \\
				 -\frac{1}{3}+\frac{2 i}{3 \sqrt{5}} & 1 & \frac{-4+\sqrt{3}}{6 \sqrt{2}}+\frac{i \left(-1+4 \sqrt{3}\right)}{6 \sqrt{10}} 
				 & \frac{4+\sqrt{3}}{6 \sqrt{2}}+\frac{i \left(1+4 \sqrt{3}\right)}{6 \sqrt{10}} \\
				 \frac{4+\sqrt{3}}{6 \sqrt{2}}+\frac{i \left(1+4 \sqrt{3}\right)}{6 \sqrt{10}} 
				 & \frac{-4+\sqrt{3}}{6 \sqrt{2}}-\frac{i \left(-1+4 \sqrt{3}\right)}{6 \sqrt{10}} & 1 
				 & -\frac{1}{4}-\frac{3}{4} i \sqrt{\frac{3}{5}} \\
				 \frac{-4+\sqrt{3}}{6 \sqrt{2}}+\frac{i \left(-1+4 \sqrt{3}\right)}{6 \sqrt{10}} 
				 & \frac{4+\sqrt{3}}{6 \sqrt{2}}-\frac{i \left(1+4 \sqrt{3}\right)}{6 \sqrt{10}} 
				 & -\frac{1}{4}+\frac{3}{4} i \sqrt{\frac{3}{5}} & 1
			\end{pmatrix},
			\\
			V^*V&=
			\tiny
			\begin{pmatrix}
				 1 & \frac{1}{3}-\frac{2 i}{3 \sqrt{5}} & \frac{1}{2 \sqrt{3}}+\frac{i \left(3+4 \sqrt{3}\right)}{6 \sqrt{5}} 
				 & -\frac{1}{2 \sqrt{3}}-\frac{i \left(-3+4 \sqrt{3}\right)}{6 \sqrt{5}} \\
				 \frac{1}{3}+\frac{2 i}{3 \sqrt{5}} & 1 & -\frac{1}{2 \sqrt{3}}+\frac{i \left(-3+4 \sqrt{3}\right)}{6 \sqrt{5}} 
				 & \frac{1}{2 \sqrt{3}}-\frac{i \left(3+4 \sqrt{3}\right)}{6 \sqrt{5}} \\
				 \frac{1}{2 \sqrt{3}}-\frac{i \left(3+4 \sqrt{3}\right)}{6 \sqrt{5}} 
				 & -\frac{1}{2 \sqrt{3}}-\frac{i \left(-3+4 \sqrt{3}\right)}{6 \sqrt{5}} & 1 & -\frac{1}{2}+\frac{1}{2} i \sqrt{\frac{3}{5}} \\
				 -\frac{1}{2 \sqrt{3}}+\frac{i \left(-3+4 \sqrt{3}\right)}{6 \sqrt{5}} 
				 & \frac{1}{2 \sqrt{3}}+\frac{i \left(3+4 \sqrt{3}\right)}{6 \sqrt{5}} & -\frac{1}{2}-\frac{1}{2} i \sqrt{\frac{3}{5}} & 1
			\end{pmatrix},
		\end{align*}
		share the eigenvalues (given approximately by)
		\begin{equation*}
			2.73115,\quad 0.932497, \quad 0.253856, \quad 0.0824931.
		\end{equation*}
		Thus $T$ passes the Grammian Test (Corollary \ref{CorollaryGrammian}).  
		
		Recall that the $ij$th entries of $U^*U$ and $V^*V$ are $\inner{u_j,u_i}$
		and $\inner{v_j,v_i}$, respectively.  Therefore to check whether $T$ passes the Angle Test
		(Theorem \ref{TheoremWeak}), we need only compare the moduli of the entries
		of $U^*U$ and $V^*V$.  The moduli of the entries of $U^*U$ and $V^*V$ are equal, entry-by-entry,
		and given by
		\begin{equation*}\footnotesize
			\begin{pmatrix}
				 1 & \frac{1}{5} & \frac{2}{15} \left(3+\sqrt{3}\right) & \frac{2}{15} \left(3-\sqrt{3}\right) \\
				 \frac{1}{5} & 1 & \frac{2}{15} \left(3-\sqrt{3}\right) & \frac{2}{15} \left(3+\sqrt{3}\right) \\
				 \frac{2}{15} \left(3+\sqrt{3}\right) & \frac{2}{15} \left(3-\sqrt{3}\right) & 1 & \frac{2}{5} \\
				 \frac{2}{15} \left(3-\sqrt{3}\right) & \frac{2}{15} \left(3+\sqrt{3}\right) & \frac{2}{5} & 1
			\end{pmatrix}
		\end{equation*}
		Thus $T$ passes the Angle Test (Theorem \ref{TheoremWeak}).

		On the other hand, since
		\begin{equation*}
			\inner{u_1,u_2} \inner{u_2,u_3} \inner{u_3,u_1} = \tfrac{2}{75} (5-i \sqrt{5}) \neq 
			\tfrac{2}{75} (5+i \sqrt{5}) = \overline{ \inner{ v_1,v_2} \inner{v_2,v_3} \inner{v_3,v_1} },
		\end{equation*}
		the Strong Angle Test (Theorem \ref{TheoremStrong}) asserts that $T$ is \emph{not} UECSM.
		Similar computations reveal that the desired condition \eqref{eq-Cocycle} is violated for
		the triples $(i,j,k) = (1,2,4),(1,3,4),(2,3,4)$ as well.

		Working through the mechanics of the proof of Theorem \ref{TheoremStrong}, we find that the matrix
		$B = (\beta_{ij})$ is given by
		\begin{equation*}B = \tiny
			\begin{pmatrix}
				 1 & -1 & \frac{11+6 \sqrt{3}-2 i \sqrt{5}-3 i \sqrt{15}}{4 \sqrt{2} \left(3+\sqrt{3}\right)} 
				 & \frac{11-6 \sqrt{3}-2 i \sqrt{5}+3 i \sqrt{15}}{4 \sqrt{2} \left(-3+\sqrt{3}\right)} \\
				 -1 & 1 & \frac{11-6 \sqrt{3}+2 i \sqrt{5}-3 i \sqrt{15}}{4 \sqrt{2} \left(-3+\sqrt{3}\right)} 
				 & \frac{11+6 \sqrt{3}+2 i \sqrt{5}+3 i \sqrt{15}}{4 \sqrt{2} \left(3+\sqrt{3}\right)} \\
				 \frac{11+6 \sqrt{3}+2 i \sqrt{5}+3 i \sqrt{15}}{4 \sqrt{2} \left(3+\sqrt{3}\right)} 
				 & \frac{11-6 \sqrt{3}-2 i \sqrt{5}+3 i \sqrt{15}}{4 \sqrt{2} \left(-3+\sqrt{3}\right)} & 1 
				 & \frac{1}{8} \left(7-i \sqrt{15}\right) \\
				 \frac{11-6 \sqrt{3}+2 i \sqrt{5}-3 i \sqrt{15}}{4 \sqrt{2} \left(-3+\sqrt{3}\right)} 
				 & \frac{11+6 \sqrt{3}-2 i \sqrt{5}-3 i \sqrt{15}}{4 \sqrt{2} \left(3+\sqrt{3}\right)} 
				 & \frac{1}{8} \left(7+i \sqrt{15}\right) & 1
			\end{pmatrix}
		\end{equation*}
		In particular, each entry of $B$ is unimodular whence we once again see 
		that $T$ passes the Angle Test.  Also observe that the rank of $B$ is $4$ and its eigenvalues are approximately
		\begin{equation*}
		3.88114, \quad 0.694237, \quad -0.66798, \quad 0.0926015.
		\end{equation*}
		In particular, $B$ is neither rank-one nor positive.

		We should also mention that J.~Tener's procedure \UECSMTest{}
		also confirms, via entirely different methods (see Section \ref{SectionTener}),
		that $T$ is not UECSM.  
	\end{Example}

	The preceding example was discovered by the first author during a search of 10 million
	random integer matrices.  Such examples appear to be exceedingly rare and those
	which can be worked through in closed form rarer still.  Moreover, we were unable
	to find a $3 \times 3$ matrix with the same properties.

\section{Comparison with Tener's \UECSMTest}\label{SectionTener}

	J.~Tener's procedure \UECSMTest, introduced in \cite{Tener}, is an 
	effective tool in determining whether a given matrix is UECSM.  However, there
	are certain limitations inherent in the procedure.  To be more specific, \UECSMTest{}
	cannot be applied if the given matrix $T$ is $4 \times 4$ or larger \emph{and} either Cartesian component $A$ or $B$
	in the decomposition $T = A + iB$ (where $A = A^*$ and $B = B^*$) has a repeated eigenvalue. 

	On the other hand, the criterion for applying \Test{} is simply that the matrix $T$
	have distinct eigenvalues.  In this section, we compare the two procedures and demonstrate the existence of 
	matrices, both UECSM and not, for which either \UECSMTest{} or \Test{} (possibly both)
	fail to apply.  In particular, this demonstrates that \Test{} and \UECSMTest{} are complimentary
	procedures in the sense that neither test subsumes the other.

	Obviously many normal matrices (e.g., the $4 \times 4$ identity matrix) do not
	satisfy the hypotheses of either test.  This does not pose a problem, however, since
	the Spectral Theorem asserts that every normal
	matrix is unitarily equivalent to a \emph{diagonal matrix} whence every normal matrix is UECSM.
	In light of the preceding remarks, we therefore focus our attention on producing examples which are \emph{non-normal}.

	Finding non-normal matrices for which \Test{} is applicable and for which
	\UECSMTest{} is not is relatively straightforward.  Several examples are listed in Table 1 below
	(where $\sigma(T), \sigma(A), \sigma(B)$ denote the spectra of the operators $T,A,B$, respectively,
	in the decomposition $T = A+iB$, $A = A^*$, $B = B^*$).

	\begin{quote}\footnotesize
		\begin{equation*}
			\begin{array}{|c||c|c|c|c|c|}
			\hline
			T & \sigma(T) & \sigma(A) & \sigma(B) & \text{UECSM?} \\
			\hline
				\begin{pmatrix}
					 2 & 0 & 0 & 0 \\
					 0 & 4 & 0 & 0  \\
					 0 & 0 & 8 & 4 \\
					 0 & 0 & 0 & -2
				\end{pmatrix}
				& -2,2,4,8 & 2,4 ,3 \pm \sqrt{29} & 0,0,\pm 2 & \text{Yes} \\
				\hline
				\begin{pmatrix}
				 2 & 0 & 0 & 0 \\
				 0 & 0 & 4 & 0 \\
				 0 & 0 & 0 & 2 \\
				 0 & 8 & 0 & 0
				\end{pmatrix}
				& 2, 4, -2\pm 2i\sqrt{3}
				& \text{distinct}
				&0,0,\pm \sqrt{21}
				&\text{No} \\
				\hline
				\hline
				\begin{pmatrix}
				 4 & 1 & -1 & -2 \\
				 3 & 2 & -4 & 1 \\
				 -1 & -2 & 4 & 1 \\
				 -4 & 1 & 3 & 2
				\end{pmatrix}
				& -2,2,4,8 & 2,4, 3 \pm \sqrt{29} & 0,0,\pm 2 & \text{Yes} \\
				\hline
				\begin{pmatrix}
				 4 & -1 & 1 & -2 \\
				 -2 & 1 & -1 & 4 \\
				 -1 & 4 & -2 & 1 \\
				 1 & -2 & 4 & -1
				\end{pmatrix}
				& 2, 4, -2\pm 2i\sqrt{3}
				& \text{distinct}
				&0,0,\pm \sqrt{21}
				&\text{No} \\
				\hline
			\end{array}
		\end{equation*}
		\textsc{Table 1}:  Examples of simple matrices for which \Test{} is applicable and \UECSMTest{} is not.
		The third and fourth matrices listed are, respectively, unitarily equivalent to the first and second matrices.  The eigenvalues
		of the second and fourth matrices are distinct, but too long to display explicitly in the confines of the table.
	\end{quote}
	\smallskip

	In cases where $T$ has repeated eigenvalues,
	one frequently finds that both $A$ and $B$ both have distinct eigenvalues.  Such matrices are testable
	by \UECSMTest{} but not by \Test{}.  Several simple examples are listed in Table 2 below.
	
	\begin{quote}\footnotesize
		\begin{equation*}
			\begin{array}{|c|c|c|c|c|}
				\hline
				T & \sigma(T) & \sigma(A) & \sigma(B) & \text{UECSM?} \\
				\hline
				\begin{pmatrix}
				 0 & 18 & 0 \\
				 0 & 0 & 18 i  \\
				 0 & 0 & 0
				\end{pmatrix}
				& 0,0,0 & 0,\pm 9 \sqrt{2} & 0,\pm  9 \sqrt{2} & \text{Yes} \\
				\hline
				\begin{pmatrix}
					 0 & 18 & 0 \\
					 0 & 0 & 9 i  \\
					 0 & 0 & 0
				\end{pmatrix}
				& 0,0,0 & 0,\pm \frac{9\sqrt{5}}{2}  & 0,\pm  \frac{9\sqrt{5}}{2} & \text{No} \\
				\hline
				\hline
				\begin{pmatrix}
					 8+4 i & 4+8 i & -8+8 i \\
					 -8+2 i & -4+4 i & 8+4 i \\
					 4-4 i & 2-8 i & -4-8 i
				\end{pmatrix}
				& 0,0,0 & 0,\pm 9 \sqrt{2} & 0,\pm  9 \sqrt{2} & \text{Yes} \\
				\hline
				\begin{pmatrix}
					 8+2 i & 4+4 i & -8+4 i \\
					 -8+i & -4+2 i & 8+2 i \\
					 4-2 i & 2-4 i & -4-4 i
				\end{pmatrix}
				& 0,0,0 & 0,\pm \frac{9\sqrt{5}}{2}  & 0,\pm  \frac{9\sqrt{5}}{2} & \text{No} \\
				\hline
			\end{array}
		\end{equation*}
		\textsc{Table 2}:  Matrices for which \UECSMTest{} is applicable and \Test{} is not.
		The third and fourth matrices listed are, respectively, unitarily equivalent to the first and second matrices.
	\end{quote}
	\medskip

	It is possible to construct matrices for which neither \Test{}
	nor Tener's \UECSMTest{} is applicable.
	To be more specific, we exhibit several matrices $T$ such that
	\begin{enumerate}\addtolength{\itemsep}{0.5\baselineskip}
		\item $T$ has repeated eigenvalues (so that \Test{} is not applicable),
		\item $T = A+iB$ is at least $4 \times 4$ and either $A$ or $B$ has repeated eigevalues
			(so that \UECSMTest{} is not applicable).
	\end{enumerate}
	Although it is relatively straightforward to produce matrices $T$ satisfying (i) and (ii),
	it is naturally quite difficult to check whether $T$ is UECSM or not since by design 
	neither \Test{} nor \UECSMTest{} are applicable. 
	Fortunately, the set of matrices having properties (i) and (ii) has Lebesgue measure zero in $M_n(\C)$.
	
	We require a couple preliminary lemmas.  The following can be found in 
	\cite[Ex.~1]{Tener} or \cite[Ex.~1]{SNCSO}:

	\begin{Lemma}\label{Lemma3x3Tricky}
		The matrix
		\begin{equation}\label{eq-3x3NilpotentTrick}
			\megamatrix{0}{a}{0}{0}{0}{b}{0}{0}{0}
		\end{equation}
		is UECSM if and only if $ab = 0$ or $|a| = |b|$.
	\end{Lemma}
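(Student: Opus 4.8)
The plan is to treat sufficiency and necessity separately, working directly with the conjugation characterization of UECSM, since the Strong Angle Test (Theorem \ref{TheoremStrong}) does not apply: the matrix in \eqref{eq-3x3NilpotentTrick} has the single repeated eigenvalue $0$. Write $N$ for that matrix and let $e_1,e_2,e_3$ denote the standard basis of $\C^3$, so that $Ne_1 = 0$, $Ne_2 = ae_1$, and $Ne_3 = be_2$.

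For sufficiency I would first dispose of the degenerate case $ab = 0$: if, say, $a = 0$, then $N$ is block diagonal with respect to the orthogonal decomposition $\C e_1 \oplus \operatorname{span}\{e_2,e_3\}$, with blocks $0$ and $\minimatrix{0}{b}{0}{0}$. The $2\times 2$ block is UECSM by Corollary \ref{Corollary2x2}, and an orthogonal direct sum of complex symmetric matrices is complex symmetric, so $N$ is UECSM; the case $b = 0$ is identical. When $0 < |a| = |b|$, I would conjugate $N$ by a diagonal unitary $\operatorname{diag}(1,\mu,\nu)$ with $\mu,\nu$ unimodular chosen so that both superdiagonal weights become the common value $|a|$ — this is possible precisely because $|a|/|b| = 1$. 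The result is $|a|$ times the Jordan nilpotent $\megamatrix{0}{1}{0}{0}{0}{1}{0}{0}{0}$, which is $C$-symmetric for the flip conjugation $C(z_1,z_2,z_3) = (\overline{z_3},\overline{z_2},\overline{z_1})$; scalar multiples preserve this property, so $N$ is UECSM.

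For necessity, assume $ab \neq 0$ and that $N$ is UECSM, so $N = CN^*C$ for some conjugation $C$, equivalently $CN = N^*C$. The engine is the nested kernel chain: by \eqref{eq-Symmetry}, $C$ carries $\ker N^j$ onto $\ker(N^*)^j$ for each $j$. Since $\ker N = \C e_1$ and $\ker N^* = \C e_3$ are one-dimensional, the isometry of $C$ forces $Ce_1 = \omega_1 e_3$ for some unimodular $\omega_1$ (and hence $Ce_3 = \omega_1 e_1$ by $C^2 = I$). Likewise $C$ maps $\ker N^2 = \operatorname{span}\{e_1,e_2\}$ onto $\ker(N^*)^2 = \operatorname{span}\{e_2,e_3\}$; combining this with the isometry relation $\inner{Ce_1,Ce_2} = \inner{e_2,e_1} = 0$, which says $Ce_2 \perp e_3$, pins $Ce_2$ down to $Ce_2 = pe_2$ for some unimodular $p$.

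With $C$ thus determined on the basis, the relation $CN = N^*C$ applied to $e_2$ yields, after using $Ne_2 = ae_1$ and $N^*e_2 = \overline{b}\,e_3$, the single scalar identity $\overline{a}\,\omega_1 = p\,\overline{b}$. Taking moduli and recalling $|\omega_1| = |p| = 1$ gives $|a| = |b|$, which completes the necessity direction. I expect the main obstacle to be the extraction of the rigid form of $C$ on the standard basis: everything downstream is a one-line modulus computation, but it is the kernel-chain constraint \eqref{eq-Symmetry} together with the antilinear isometry of $C$ that does the real work, and some care is needed to verify that these constraints leave no freedom in $Ce_2$ beyond a unimodular scalar.
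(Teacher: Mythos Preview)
The paper does not supply its own proof of Lemma~\ref{Lemma3x3Tricky}; it merely cites \cite[Ex.~1]{Tener} and \cite[Ex.~1]{SNCSO}. So there is nothing in the paper to compare against, and your argument must stand on its own. It does: both directions are correct. The sufficiency side is routine, and for necessity your use of the kernel chain \eqref{eq-Symmetry} together with the isometry of $C$ cleanly forces $Ce_1 \in \C e_3$, $Ce_2 \in \C e_2$, after which the intertwining $CN = N^*C$ applied to $e_2$ gives $\bar a\,\omega_1 = p\,\bar b$ and hence $|a| = |b|$. The one place worth a remark is that you should state explicitly that $ab \neq 0$ is what makes $\ker N$ and $\ker N^*$ one-dimensional (and $\ker N^2$, $\ker(N^*)^2$ two-dimensional); you use this implicitly but it is the only place the hypothesis enters.

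Your approach is very much in the spirit of the paper's methods --- indeed the remark just after Theorem~\ref{TheoremWeak} notes that the eigenspace-symmetry idea extends beyond the distinct-eigenvalue setting, citing exactly the $3\times 3$ example from \cite[Ex.~7]{CSOA}. Your proof is a clean instance of that extension, replacing eigenspaces by the flag of generalized eigenspaces.
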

	
	In particular, the matrix \eqref{eq-3x3NilpotentTrick} is \emph{not} UECSM
	whenever $a$ and $b$ are nonzero and satisfy $|a| \neq |b|$.  In our construction,
	we intend to use \eqref{eq-3x3NilpotentTrick} as a building block  in conjunction with 
	the following lemma from \cite{CSPI}:
	
	\begin{Lemma}\label{LemmaZero}
		$T$ is UECSM if and only if $T \oplus 0$ is UECSM.
	\end{Lemma}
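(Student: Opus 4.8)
The plan is to prove the two implications separately, treating the forward direction as routine and reserving the real work for the converse. For the forward direction, suppose $T$ is $C$-symmetric for some conjugation $C$ on the space $\h$ on which $T$ acts, and let $C_0$ be any conjugation on the space $\mathcal{K}$ carrying the zero summand (for instance the canonical conjugation $J$). I would simply set $\widetilde{C} = C \oplus C_0$, which is a conjugation on $\h \oplus \mathcal{K}$, and verify directly that $\widetilde{C}(T^* \oplus 0)\widetilde{C} = (CT^*C) \oplus 0 = T \oplus 0$, since $C_0$ fixes the zero block. Thus $T \oplus 0$ is UECSM, and this same observation is what will let me freely append zero summands at the very end of the converse.

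For the converse, the key object is the \emph{common kernel} $\mathcal{Z} = \ker A \cap \ker A^*$ of $A = T \oplus 0$. First I would record that $\mathcal{Z}$ is a reducing subspace for $A$: since $A\mathcal{Z} = A^*\mathcal{Z} = \{0\} \subseteq \mathcal{Z}$, the subspace $\mathcal{Z}$ is invariant under both $A$ and $A^*$, so $A = 0_{\mathcal{Z}} \oplus A_0$ where $A_0 = A|_{\mathcal{Z}^\perp}$ satisfies $\ker A_0 \cap \ker A_0^* = \{0\}$. Computing the common kernel explicitly from $\ker A = \ker T \oplus \mathcal{K}$ and $\ker A^* = \ker T^* \oplus \mathcal{K}$ gives $\mathcal{Z} = (\ker T \cap \ker T^*) \oplus \mathcal{K}$, and the same splitting applied to $T$ alone, namely $T = 0_{\ker T \cap \ker T^*} \oplus T_0$, identifies $A_0$ with $T_0 = T|_{(\ker T \cap \ker T^*)^\perp}$.

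The crucial step is to transfer the $C$-symmetry of $A$ down to $A_0$. Assuming $A = CA^*C$, I would show that $C$ preserves $\mathcal{Z}$: if $Ax = A^*x = 0$ then, using $AC = CA^*$ and $A^*C = CA$, both $A(Cx)$ and $A^*(Cx)$ vanish, so $Cx \in \mathcal{Z}$. Since $C$ is an involutive isometry it then also preserves $\mathcal{Z}^\perp$, and $C_0 := C|_{\mathcal{Z}^\perp}$ is a conjugation on $\mathcal{Z}^\perp$ with respect to which $A_0 = T_0$ is symmetric; hence $T_0$ is UECSM. Finally, appealing to the forward direction already proved, $T = T_0 \oplus 0_{\ker T \cap \ker T^*}$ is UECSM as well. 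The main obstacle is precisely the possibility that $\ker T \cap \ker T^* \neq \{0\}$: when this common kernel is trivial, $C$ must send the zero summand onto itself up to a unimodular scalar and the reduction is immediate, but in general $C$ genuinely entangles the $T$ and zero blocks, and isolating the intrinsic reducing subspace $\mathcal{Z}$ is exactly what disentangles them.
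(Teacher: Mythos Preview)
Your argument is correct. The forward direction is routine, and for the converse your use of the intrinsic reducing subspace $\mathcal{Z}=\ker A\cap\ker A^*$ is exactly the right idea: the verification that $C$ preserves $\mathcal{Z}$ via $AC=CA^*$ and $A^*C=CA$ is clean, and the identification $A|_{\mathcal{Z}^\perp}\cong T|_{(\ker T\cap\ker T^*)^\perp}$ then reduces the problem to the forward direction applied to $T_0$.

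Note, however, that the paper does not actually prove this lemma; it merely quotes it from \cite{CSPI} (Garcia--Wogen, \emph{Complex symmetric partial isometries}). So there is no in-paper proof to compare against. Your write-up supplies a self-contained argument that the paper omits, and the common-kernel decomposition you use is essentially the standard one for results of this type.
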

	
	In the lemma above, $T \oplus 0$ denotes the orthogonal direct sum of $T$ with a square zero matrix
	of any given size.  Since $T$ is UECSM if and only if $T - \lambda I$ is UECSM for any $\lambda \in \C$, it follows 
	from Lemma \ref{Lemma3x3Tricky} and Lemma \ref{LemmaZero} that the matrix
	\begin{equation}\label{eq-4x4AngleTenerExample}
		T=
		\left(
		\begin{array}{c|ccc}
		c & 0 & 0 & 0 \\
		\hline
		0 & 0 & a & 0 \\
		0 & 0 & 0 & b \\
		0 & 0 & 0 & 0 \\
		\end{array}
		\right)
	\end{equation}
	can be made UECSM or not according to our choice of $a$ and $b$ (the value of $c$ is irrelevant).
	A short computation then reveals that
	$\sigma(T) = \{0,0,0,c\}$ and
	\begin{align*}
		\sigma(A) &= \{0,\Re c,\pm \sqrt{|a|^2 + |b|^2} \},\\
		\sigma(B) &= \{0,\Im c,\pm \sqrt{|a|^2 + |b|^2}  \},
	\end{align*}
	whence if $c$ is either real or purely imaginary condition (ii) holds.  This leads us to the examples
	listed in Table 3 below:

	\begin{quote}\footnotesize
		\begin{equation*}
			\begin{array}{|c|c|c|c|c|}
				\hline
				T & \sigma(T) & \sigma(A) & \sigma(B) & \text{UECSM?} \\
				\hline
				\begin{pmatrix}
					4&0&0&0\\
					0&0&8&0\\
					0&0&0&8\\
					0&0&0&0\\
				\end{pmatrix}
				& 0,0,0,4 &	0,4,\pm \sqrt{2} & 0,0,\pm 4\sqrt{2} & \text{Yes} \\
				\hline
				\begin{pmatrix}
					8&0&0&0\\
					0&0&4&0\\
					0&0&0&8\\
					0&0&0&0\\
				\end{pmatrix}
				& 0,0,0,8 &	0,8,\pm 2\sqrt{5} &	0,0,\pm 2\sqrt{5} & \text{No} \\
				\hline
				\hline
				\begin{pmatrix}
					 5 & 1 & -3 & 1 \\
					 1 & -3 & 1 & 5 \\
					 1 & 5 & 1 & -3 \\
					 -3 & 1 & 5 & 1
				\end{pmatrix}
				& 0,0,0,4 &	0,4,\pm 4\sqrt{2} & 0,0,\pm 4\sqrt{2} & \text{Yes} \\
				\hline
				\begin{pmatrix}
					 5 & 1 & -1 & 3 \\
					 3 & -1 & 1 & 5 \\
					 1 & 5 & 3 & -1 \\
					 -1 & 3 & 5 & 1
				\end{pmatrix}
				& 0,0,0,8 &	0,8,\pm 2\sqrt{5} &	0,0,\pm 2\sqrt{5} & \text{No} \\
				\hline
			\end{array}
		\end{equation*}
		\textsc{Table 3}:  Matrices for which neither \UECSMTest{} nor \Test{} are applicable.
		The third and fourth matrices listed are, respectively, unitarily equivalent to the first and second matrices.
	\end{quote}

\end{document}